\documentclass[11pt,a4paper]{amsart}

\usepackage[T1]{fontenc}
\usepackage[utf8]{inputenc}
\usepackage{lmodern}
\usepackage{microtype}
\usepackage{amsmath,amssymb,amsthm}
\usepackage[margin=1.15in]{geometry}
\usepackage{booktabs}
\usepackage{xcolor}
\usepackage{hyperref}
\input{glyphtounicode}
\microtypesetup{protrusion=true,expansion=false}
\hypersetup{
  colorlinks=true,
  linkcolor=blue!45!black,
  citecolor=blue!45!black,
  urlcolor=blue!45!black,
  pdfauthor={Anish Gupta},
  pdftitle={Pairwise edge correlations in random minimum spanning trees},
  pdfsubject={Universal correlation bounds and complete-graph negative correlation},
  pdfkeywords={minimum spanning tree, negative correlation, multiplicative coalescent},
  pdfdisplaydoctitle=true
}

\theoremstyle{plain}
\newtheorem{theorem}{Theorem}[section]
\newtheorem{proposition}[theorem]{Proposition}
\newtheorem{lemma}[theorem]{Lemma}
\newtheorem{corollary}[theorem]{Corollary}
\theoremstyle{definition}
\newtheorem{definition}[theorem]{Definition}

\newtheorem{problem}[theorem]{Problem}
\theoremstyle{remark}
\newtheorem{remark}[theorem]{Remark}

\newtheorem*{theoremA}{Theorem A}
\newtheorem*{theoremB1}{Theorem B1}
\newtheorem*{theoremB2}{Theorem B2}
\newtheorem*{theoremB3}{Corollary B3}
\newtheorem*{theoremC}{Theorem C}
\newtheorem*{theoremD1}{Proposition D1}
\newtheorem*{theoremD2}{Corollary D2}
\newtheorem*{theoremD3}{Corollary D3}

\newcommand{\MST}{\operatorname{MST}}
\newcommand{\PMST}{\mathbf{P}_{\mathrm{MST}}}
\newcommand{\E}{\mathbf{E}}
\newcommand{\PP}{\mathbf{P}}
\DeclareMathOperator{\dg}{deg}

\title[Pairwise edge correlations in random minimum spanning trees]
  {Pairwise edge correlations in random minimum spanning trees:\\
  a universal bound and complete-graph negative correlation}
\author{Anish Gupta}
\address{Independent researcher}
\email{ag2269@cantab.ac.uk}
\urladdr{https://orcid.org/0009-0008-8137-7729}
\date{7 August 2026}
\subjclass[2020]{Primary 60C05; Secondary 05C80, 05C85}
\keywords{minimum spanning tree, negative correlation, pair correlation,
  multiplicative coalescent, random graphs}

\begin{document}

\begin{abstract}
Let $G$ be a finite connected multigraph whose edges receive independent weights
from one atomless law, and let $\MST(G)$ be the resulting random minimum
spanning tree. Its law is not pairwise negatively correlated: Lyons, Peres and
Schramm exhibited two positively correlated edges, and we give such an example
on a simple graph. We prove that positive correlation is nevertheless uniformly controlled:
\[
\PMST(e,f\in T)\;\le\;8\,\PMST(e\in T)\,\PMST(f\in T),
\]
answering a question of R.~Lyons recorded by Tang and Zhang. After conditioning
on all other weights, Harris's inequality gives conditional negative correlation; two
bottleneck distances and a sharp second-moment estimate control the remaining
environmental covariance. For $K_n$ we prove pairwise negative correlation for
every $n\ge3$. The key finite identity is
$\E[\dg(x)^2]=10(n-1)/n-4\E[L_n]$, where $L_n$ is the total weight of the
minimum spanning tree under rate-one exponential weights. Known expansions for
$\E[L_n]$ then give the rate of convergence to $10-4\zeta(3)$ and the limits of
both pair-correlation ratios. Finally, an explicit $K_4$ family shows that no
universal constant survives when the independent edge laws need not be
identical.
\end{abstract}

\maketitle

\section{Introduction}
\label{sec:intro}

Throughout, $G=(V,E)$ is a finite connected multigraph: loops and parallel edges
are permitted. Each edge $g\in E$ receives a weight $w(g)$, and the weights are
independent with one common atomless law $\mu$ on $\mathbb{R}$. Almost surely all
weights are distinct, so the minimum spanning tree $T=\MST(G)$ is almost surely
unique; we write $\PMST$ for its law, a probability measure on the spanning trees
of $G$, equivalently on $\{0,1\}^E$. Since the minimum spanning tree depends on
the weights only through their relative order, $\PMST$ does not depend on $\mu$:
uniform, exponential, or any other atomless law produce the same measure. We
write
\[
p_e=\PMST(e\in T),\qquad p_{ef}=\PMST(e\in T,\ f\in T).
\]

Uniform spanning trees have negatively correlated edge indicators, so it is
natural to ask whether the minimum spanning tree law behaves the same way; for
the uniform spanning tree measure the pairwise inequality $p_{ef}\le p_ep_f$ is
classical \cite[Chapter 4]{LP}. Following \cite{TZ}, say that $\PMST$ has the
\emph{pairwise negative correlation} (p-NC) property if $p_{ef}\le p_ep_f$ for
all $e\ne f$. The expectation is wrong. Lyons, Peres and Schramm \cite[\S5]{LPS}
take $K_4$, replace each of two disjoint edges by three parallel copies, and mark
one copy in each bundle; then
\begin{equation}
\label{eq:lps}
p_e=p_f=\frac{331}{1260},\qquad p_{ef}=\frac{109}{1575},\qquad
\frac{p_{ef}}{p_ep_f}=\frac{109872}{109561}=1.00283\ldots>1 .
\end{equation}
Thus the relevant quantity is the ratio $p_{ef}/(p_ep_f)$. We bound it uniformly
over all graphs and determine its sign on the complete graph, where the naive
inequality turns out to be true at every size.

\subsection{A universal factor}
\label{ssec:introA}

Tang and Zhang \cite[Question 3.13]{TZ} record the following question, which they
attribute to R.~Lyons: is there a constant $C>0$ such that
$p_{ef}\le C\,p_ep_f$ for every finite connected graph $G$ and every pair of
distinct edges $e,f$ of $G$? Their motivation is the analogous inequality with
$C=2$ for the \emph{uniform forest} measure, which follows from the work of
Br\"and\'en and Huh on Lorentzian polynomials \cite{BH}; for further uniform
spanning subgraph measures see \cite{TZ2}.

Once the factor-$1$ inequality fails, a graph-independent multiplicative bound
is a natural quantitative surrogate for negative dependence. The factor $2$ for
uniform forests shows that this surrogate remains meaningful for a neighbouring
random spanning-subgraph measure; the question is whether any such control
survives the change from a uniform measure to Kruskal's order-driven law.

\begin{theoremA}
Let $G$ be a finite connected multigraph whose edge weights are independent and
identically distributed with an atomless law, and let $e\ne f$ be edges of $G$.
Then
\[
\PMST(e,f\in T)\;\le\;8\,\PMST(e\in T)\,\PMST(f\in T).
\]
\end{theoremA}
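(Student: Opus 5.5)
We take the weights to be i.i.d.\ uniform on $[0,1]$. We condition on the weights $w_{-}$ of all edges other than $e,f$, and write the answer as conditional negative correlation plus an environmental covariance. The cycle property gives $e\in T$ if and only if $w(e)<D_e$. Here $D_e=D_e(w_{-},w(f))$ is the bottleneck distance between the endpoints of $e$ in $G-e$: the minimum over paths of the maximum weight along the path. Similarly $f\in T$ if and only if $w(f)<D_f(w_{-},w(e))$. When $w_{-}$ is fixed, $D_e$ is nondecreasing in $w(f)$, since raising a weight can only raise minimax distances. So $\{e\in T\}$ is increasing in $w(f)$, decreasing in $w(e)$, and depends on the pair $(w(e),w(f))$. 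Reversing the coordinate $w(e)$ makes both events monotone in the same direction. Harris's inequality on the product space of the two coordinates then gives
\[
\PP(e,f\in T\mid w_{-})\le \PP(e\in T\mid w_{-})\,\PP(f\in T\mid w_{-}).
\]
So $p_{ef}\le \E[X_eX_f]$, where $X_e=\PP(e\in T\mid w_{-})$ and $X_f$ is defined in the same way.

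To control $\E[X_eX_f]$, we introduce the two bottleneck distances $A_e\le B_e$. Here $A_e$ is the bottleneck distance between the endpoints of $e$ in $G-e-f$, so it sees $f$ as absent. $B_e$ is the same distance with $f$ present at weight $0$, i.e.\ with $f$ contracted. Then $D_e\in[B_e,A_e]$ after sorting the two names correctly; more precisely $D_e=\min(A_e,\max(w(f),B_e))$. Define $A_f,B_f$ symmetrically. This gives $X_e\le A_e$ and $X_f\le A_f$, while $p_e=\E[D_e]$ and $p_f=\E[D_f]$. Since every path in $G-e-f$ either avoids $f$ or uses it, $p_e\ge \E[\min(A_e,\max(U,B_e))]$ with $U$ independent uniform. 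Bounding the mixed quantity $\E[A_eA_f]$ by $p_ep_f$ has two parts. The first is the lower bound $p_e\ge \tfrac12\E[A_e]$ or a similar one: the comparison between $D_e$ and $A_e$ costs at most a factor $2$ when $B_e$ is not much smaller than $A_e$. The second, harder part is a second-moment estimate $\E[A_eA_f]\le C\,\E[A_e]\E[A_f]$.

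The main obstacle is this second-moment estimate, because $A_e$ and $A_f$ are both increasing functions of $w_{-}$. Harris's inequality therefore pushes the wrong way, making them positively correlated, and the task is to show that the correlation costs only a bounded factor. Our first attempt exploits the bottleneck structure. The event $\{A_e>s\}$ means the endpoints of $e$ are disconnected in the subgraph of edges with weight $\le s$ in $G-e-f$. Hence
\[
\E[A_eA_f]=\int_0^1\!\!\int_0^1 \PP(A_e>s,\,A_f>t)\,ds\,dt.
\]
By symmetry we treat $s\le t$. If the endpoints of $e$ are disconnected at level $s$ and the endpoints of $f$ are disconnected at level $t$, then the endpoints of $f$ are also disconnected at level $s$. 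We would try to bound this joint probability by $4\,\PP(A_e>s)\PP(A_f>t)$ using a decoupling that adds the edge $f$ back at a fresh weight, which is where $B$ enters. Combined with the factor $2$ from the first step, this would give the constant $8$. Making that decoupling rigorous, presumably through the identity $D_e=\min(A_e,\max(w(f),B_e))$ applied symmetrically, is where I expect most of the work.
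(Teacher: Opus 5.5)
Your first steps coincide with the paper's: freezing the environment, applying Harris to the two marked coordinates, and bounding $X_e\le A_e$. On Harris, after reversing one coordinate each indicator is coordinatewise monotone, one increasing and one decreasing, and that is what gives $\le$; ``the same direction'' is the wrong phrase. The lower comparison needs no proviso on $B_e$. Your own identity gives $D_e\ge\min(A_e,w(f))$, hence $X_e\ge\int_0^1\min(A_e,v)\,dv=A_e-A_e^2/2\ge A_e/2$ pointwise, and the worst case is $B_e=0$.

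The genuine gap is the step you flag as the main obstacle, and the route you sketch for it fails. The pointwise decoupling $\PP(A_e>s,\,A_f>t)\le4\,\PP(A_e>s)\,\PP(A_f>t)$ is false. Take $G$ to be $k+2$ parallel $xy$-edges with $e,f$ two of them. Then $A_e=A_f=A$ is the minimum of $k$ uniforms, and for $s\le t$ the inequality reads $(1-t)^k\le4(1-s)^k(1-t)^k$, which fails as soon as $(1-s)^k<1/4$. Tying the endpoints of two disjoint edges together by large parallel bundles produces the same failure with $e,f$ disjoint. Since $A_e$ and $A_f$ can be essentially the same random variable, no argument that decouples their joint tails level by level can succeed; a bounded constant appears only after integrating.

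The missing idea is not to decouple at all. Cauchy--Schwarz gives $\E[A_eA_f]\le\sqrt{\E[A_e^2]\,\E[A_f^2]}$, which absorbs whatever dependence there is and reduces the problem to a one-variable estimate $\E[A^2]\le2(\E A)^2$ for a single connection time. This is where percolation enters. The tail $q(u)=\PP(A>u)$ is the probability that the two endpoints are disconnected in Bernoulli($u$) percolation on $G-e-f$. Realising Bernoulli($s+t$) percolation as the union of independent layers of densities $s$ and $t/(1-s)\ge t$ gives $q(s+t)\le q(s)q(t)$. Hence $\E[A^2]=2\int_0^\infty u\,q(u)\,du=2\iint q(s+t)\,ds\,dt\le2(\E A)^2$.

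Your bookkeeping also needs correcting. The comparison $\E[A_e]\le2p_e$ is used for both marked edges, so a factor $4$ from the second-moment step would give $16$, not $8$. The constant $8$ is $2$ from the second moment times $2\cdot2$ from one deletion comparison per edge.
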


Theorem A is proved in \S\ref{sec:universal}. The multigraph setting includes
the original obstruction \eqref{eq:lps}; loops, bridges, parallel marked edges,
and disconnected deleted environments are covered in
\S\ref{sec:degenerate}.

The constant $8$ is not optimal. Proposition~\ref{prop:bundle} finds the exact
maximum $78100/77841=1.00332\ldots$ in the two-parameter parallel-bundle family
containing \eqref{eq:lps}, but simple graphs already do better:
Proposition~\ref{prop:hub-witnesses} gives the ratio
$13938405/13872419=1.00475\ldots$. Thus the optimal constant lies in
$[13938405/13872419,\,8]$. Corollary~\ref{cor:small-marginals} improves the
upper factor to $2+o(1)$ when both marginals vanish.

Positive correlation is not an artefact of parallel edges. Proposition
\ref{prop:simple-counterexample} gives a simple graph on five vertices with
$p_{ef}/(p_ep_f)=1450/1449>1$.

\subsection{The complete graph, at every size}
\label{ssec:introB}

For $K_n$ with $n\ge3$ every edge has the same inclusion probability
\begin{equation}
\label{eq:p0}
p_0=\PP\bigl(e\in\MST(K_n)\bigr)=\frac{n-1}{\binom n2}=\frac2n ,
\end{equation}
because every spanning tree has $n-1$ edges and the edges of $K_n$ are
equivalent under automorphisms. Write $p_1$ for the probability that two fixed
edges sharing a vertex both lie in the tree, and $p_2$ for the same probability
for two fixed disjoint edges (defined for $n\ge4$).

Tang and Zhang \cite[Theorem 1.1]{TZ} prove that $p_2\le p_0^2$ for all $n$
beyond an unspecified threshold, by running Fatou's lemma against the local weak
limit of $\MST(K_n)$ \cite{ABerry}; their argument yields no value for the
threshold. For adjacent pairs they reduce the problem to an upper bound on
$\E[\dg(x)^2]$ and leave it open, writing that while the matching lower bound
``via Fatou's lemma and the local limit \dots\ is straightforward, proving a
corresponding upper bound seems more challenging'' \cite[\S3.2.2]{TZ}. Both
cases in fact hold at every size, strictly.

\begin{theoremB1}
For every $n\ge3$ and every two distinct edges of $K_n$ sharing a vertex,
\[
p_1\;<\;p_0^2=\frac4{n^2}.
\]
\end{theoremB1}

\begin{theoremB2}
For every $n\ge4$ and every two disjoint edges of $K_n$,
\[
p_2\;<\;p_0^2=\frac4{n^2}.
\]
\end{theoremB2}

\begin{theoremB3}
For every $n\ge3$ the measure $\PMST$ on $K_n$ has the full pairwise negative
correlation property. (For $n=3$ there is no disjoint pair, so the statement
there is exactly Theorem B1.)
\end{theoremB3}

These are proved in \S\ref{sec:kn}. Theorem B1 settles the adjacent case left
open in \cite[\S3.2.2]{TZ}. Theorem B2 gives an effective complete-range proof
of the disjoint conclusion of \cite[Theorem 1.1]{TZ}, by a different method and
without local weak convergence or Fatou's lemma. The two inequalities are
strict. In the computed range,
$p_1/p_0^2$ increases from $0.7500$ at $n=3$ to $0.7851$ at $n=30$, and
$p_2/p_0^2$ from $0.9778$ at $n=4$ to $0.9948$ at $n=30$
(Table~\ref{tab:exact}). Strictness is a feature of $K_n$ and not of the
measure in general: if $e$ and $f$ are two bridges of a connected graph then
$p_e=p_f=p_{ef}=1$, and $p_{ef}=p_ep_f$ exactly.

\subsection{An exact identity for the complete graph}
\label{ssec:introD}

Theorems B1 and B2 are two sides of one estimate, and the object they estimate
is classical. Let $L_n$ denote the total weight of $\MST(K_n)$ when the edge
weights are i.i.d.\ $\mathrm{Exp}(1)$.

\begin{theoremD1}
For every $n\ge2$ and every vertex $x$ of $K_n$,
\[
\E_{\MST(K_n)}\bigl[\dg(x)^2\bigr]=\frac{10(n-1)}{n}-4\,\E[L_n].
\]
\end{theoremD1}

\begin{theoremD2}
For $n\ge3$, adjacent pairwise negative correlation on $K_n$ is equivalent to
the first estimate below; for $n\ge4$, disjoint pairwise negative correlation is
equivalent to the second:
\begin{gather*}
p_1\le p_0^2\iff \E[L_n]\ \ge\ \frac{(n-1)(n+2)}{n^2}\ \ (\to1),\\[2pt]
p_2\le p_0^2\iff \E[L_n]\ \le\ \frac{(n-1)(5n+6)}{4n^2}\ \ \Bigl(\to\tfrac54\Bigr).
\end{gather*}
Both estimates themselves hold for every $n\ge2$. The first is strict for
$n\ge3$, while the second is strict for $n\ge4$.
Moreover, with $H_n=\sum_{k=1}^n1/k$, the lower estimate strengthens to
\[
\E[L_n]\ \ge\ 1+\frac{H_n-1}{n}-\frac1{n^2}.
\]
This strengthened bound is exact for $n=2,3$ and strict for $n\ge4$.
\end{theoremD2}

\begin{theoremD3}
$\displaystyle\lim_{n\to\infty}\E_{\MST(K_n)}\bigl[\dg(x)^2\bigr]=10-4\zeta(3)
=5.1917723873616\ldots$, which answers \cite[Question 3.8]{TZ} affirmatively and
identifies the limit with the second moment $\E[N^2]$ of the root degree of the
wired minimal spanning forest on the Poisson-weighted infinite tree computed in
\cite[Proposition 3.11]{TZ}.
\end{theoremD3}

Thus one merger functional controls both pair correlations and the classical
MST weight. Proposition D1 also explains the constant $10-4\zeta(3)$ in the
PWIT calculation of \cite{TZ}. Only Corollary D3 and its rate use the
asymptotic results of \cite{CFIJS,LiZhang}; Theorems B1 and B2 remain finite
and effective at every $n$.

\subsection{Identically distributed weights}
\label{ssec:introC}

Question 3.13 is posed for a single common weight law. Since $\PMST$ itself does
not depend on which atomless law is used, one might expect that hypothesis to be
cosmetic. It is not: as soon as different edges may carry different laws, no
universal constant exists, already on $K_4$.

\begin{theoremC}
For each integer $t\ge1$ let $G_t$ be the complete graph on $\{a,b,c,d\}$ in which the two
disjoint edges $ab$ and $cd$ carry the law with distribution function $x^t$ on
$[0,1]$ (that is, the law of $\max(U_1,\dots,U_t)$) and the four remaining edges
are uniform on $[0,1]$, all six weights independent. Write $R(t)$ for the ratio
$p_{ab,cd}/(p_{ab}\,p_{cd})$ of the resulting minimum spanning tree measure.
Then
\[
R(t)=\frac{2(5t+6)(t+1)(t+4)^2(2t+3)}{9\,(5t^2+12t+8)^2},
\qquad
R(t)=\frac{4t}{45}+\frac{46}{75}+O(t^{-1}) ,
\]
and $R(t)$ is strictly increasing on the positive integers, first exceeding
$8$ at $t=84$. In particular $R(t)\to\infty$, so no inequality of the form of Question
3.13 holds, with any constant, for independent non-identically distributed
atomless edge weights.
\end{theoremC}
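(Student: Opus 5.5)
The plan is a direct computation: condition on the two special weights $x=w(ab)$ and $y=w(cd)$, compute the three conditional probabilities as polynomials in $(x,y)$ on each side of the diagonal, and integrate against the density $t^2x^{t-1}y^{t-1}$ on $[0,1]^2$. Write $\alpha,\beta,\gamma,\delta$ for the uniform weights of the cross edges $ac,ad,bc,bd$. Because these four edges carry a common law, the automorphism swapping $\{a,b\}$ with $\{c,d\}$ shows that $p_{ab}=p_{cd}$. So we only need $p_{ab}$ and the joint probability $p_{ab,cd}$.

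\emph{Marginal.} I will use the cycle criterion: $ab\in T$ if and only if $a$ and $b$ are not joined by a path of edges lighter than $x$.
\begin{itemize}
\item If $y<x$, the edge $cd$ is already present below level $x$, so $c$ and $d$ are merged. Then $a,b$ are joined exactly when each of $a,b$ has a cross edge below $x$. This gives $\PP(ab\notin T\mid x,y)=(1-(1-x)^2)^2$.
\item If $y>x$, the edge $cd$ is absent, and the only possible paths are $a\text{-}c\text{-}b$ and $a\text{-}d\text{-}b$. This gives $\PP(ab\notin T\mid x,y)=1-(1-x^2)^2$.
\end{itemize}
Integrating in $y$ first yields $\PP(y<x)=x^t$. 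Hence $p_{ab}=\int_0^1 t x^{t-1}\bigl[x^t\,q_1(x)+(1-x^t)\,q_2(x)\bigr]\,dx$, where $q_1,q_2$ are the two complementary polynomials above. This is a finite sum of Beta-type integrals, so it is rational in $t$.

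\emph{Joint event.} If $ab,cd\in T$, then $T=\{ab,cd,m\}$ for a single cross edge $m$, and $m$ must be the lightest cross edge. Otherwise swapping $m$ for a lighter cross edge gives a lighter spanning tree, since every cross edge reconnects the two components of $\{ab,cd\}$. Conversely, $\{ab,cd,m\}$ is the MST if and only if every other cross edge $g$ is the maximum of its fundamental cycle. That cycle consists of $g$, $m$, and whichever of $ab,cd$ are needed to join the endpoints.
\begin{itemize}
\item The cross edge sharing neither endpoint with $m$ must exceed $\max(x,y)$.
\item The cross edge sharing the $\{a,b\}$-endpoint of $m$ must exceed $y$.
\item The cross edge sharing the $\{c,d\}$-endpoint of $m$ must exceed $x$.
\end{itemize}
The condition $g>m$ is automatic. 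Summing over the four choices of $m$ by symmetry, and integrating over $m=s$ with the other three uniform weights constrained, gives $p_{ab,cd}$ as an explicit double integral in $(x,y)$. I will split it into the regions $\{x<y\}$ and $\{y<x\}$ (symmetric to each other) and reduce it to Beta integrals. The result should be rational in $t$ with denominator a product of factors like $(t+1),(t+4),(2t+3)$, matching the claimed formula.

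\emph{Assembling and analysing.} Forming $R(t)=p_{ab,cd}/p_{ab}^2$ and simplifying should give the stated closed form. As a sanity check, $t=1$ must return the i.i.d.\ value on $K_4$, which is $p_2/p_0^2=0.9778\ldots$ from the discussion of Theorem B2. The expansion $R(t)=4t/45+46/75+O(t^{-1})$ follows by polynomial division: the leading coefficients are $2\cdot5\cdot2/(9\cdot25)=4/45$, and the next term comes from expanding the ratio of degree-$5$ over degree-$4$ polynomials. For monotonicity I will clear denominators in $R(t+1)-R(t)$ and show that the resulting numerator polynomial in $t$ has positive coefficients, or at least is positive for $t\ge1$ after a shift. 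The threshold statement is then a direct rational evaluation at $t=83$ and $t=84$.

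I expect the main obstacle to be the joint integral: handling the three-way constraints correctly in both regions $x<y$ and $y<x$ without bookkeeping errors. I would cross-check the result by verifying the $t=1$ value and by a small-$t$ numerical check of both $p_{ab}$ and $p_{ab,cd}$.
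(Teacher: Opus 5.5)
Your plan is correct and is essentially the paper's proof. The paper likewise conditions on $w(ab)$ and $w(cd)$, through its four-terminal Lemma~\ref{lem:fourterminal} with $F=G=x^t$ and $H=x$, and uses the same marginal integrand. It then evaluates Beta integrals to obtain $A_t$ and $J_t$, proves monotonicity by a positive-coefficient certificate for the shifted difference polynomial, and checks $R(83)<8<R(84)$.

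The only variation is your derivation of the joint integrand through the lightest cross edge $m$ at weight $s$. In that integral each of the other three cross edges must exceed $\max(s,\cdot)$, so the lightest-edge condition has to be kept explicitly, not dropped as ``automatic''. Integrating out $s$ then gives exactly the paper's $2(1-x^2)(1-y)^2-(1-y)^4$ on $\{x<y\}$.
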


Subdividing an edge into a path of $t$ i.i.d.\ uniform edges realizes the same
maximum-weight law but not the same marked-edge event: requiring one fixed
subdivided edge to lie in the tree introduces an attenuation that destroys the
amplification above (Remark~\ref{rem:attenuation}). Thus Theorem C does not
contradict the i.i.d.\ theorem.

\subsection{The idea of the proofs}
\label{ssec:idea}

\subsubsection*{Theorem A}
The obstruction \eqref{eq:lps} rules out a universal factor $1$. Freeze
the weights of all edges other than $e$ and $f$ --- call this the
\emph{environment} --- and let $a,b,c$ be the conditional probabilities of
$\{e\in T\}$, $\{f\in T\}$ and $\{e,f\in T\}$ given it, so that these are
functions of the environment alone. By Kruskal's rule the edge $e$ is accepted
exactly when its two endpoints are still separated by the strictly lighter
edges. At frozen environment this makes the indicator of $\{e\in T\}$
nonincreasing in $w(e)$ and nondecreasing in $w(f)$, and symmetrically for $f$:
raising the weight of one marked edge helps the other. Two functions of two
independent coordinates that are monotone in opposite directions have
nonpositive covariance, so Harris's inequality gives $c\le ab$ pointwise, with
no further hypothesis on the tie-free environment. Conditional negative
correlation therefore holds almost surely, and the whole failure exhibited by \eqref{eq:lps}
lives in the covariance of $a$ and $b$ under the environment measure.

The remaining term is the environmental covariance. Its natural proxy is the
bottleneck distance $A$ between the endpoints of $e$ in the environment graph
$H=G-\{e,f\}$: were $f$ absent, $e$ would be accepted precisely when $w(e)<A$,
so $a$ would equal $A$ exactly. The presence of $f$ perturbs this by at most a
factor $2$ in each direction (Lemma~\ref{lem:sandwich}), so with $B$ the
bottleneck distance between the endpoints of $f$ it suffices to bound
$\E[AB]\le\sqrt{\E[A^2]\E[B^2]}$, Cauchy--Schwarz absorbing whatever dependence
$A$ and $B$ have. What remains is to compare $\E[A^2]$ with $(\E A)^2$, and here
the probabilistic content is that $A$ is a percolation first-passage time:
$\PP(A>t)$ is the probability that two vertices are not connected in
Bernoulli($t$) percolation on $H$, and realising density $s+t$ as two
independent sprinkled layers shows that this tail is submultiplicative. A
nonnegative random variable with submultiplicative tail --- a
new-better-than-used random variable --- satisfies $\E[A^2]\le2(\E A)^2$. Three factors of $2$, one
from that moment bound and one from each of the two perturbation estimates,
produce the constant $8$.

\subsubsection*{The complete graph}
Here the mechanism is different and the answer is exact. Give every edge of
$K_n$ an independent rate-one exponential clock, run Kruskal in the order in
which the clocks ring, and record only the accepted mergers. The resulting
process on partitions of the vertex set is the discrete skeleton of the
multiplicative coalescent: from components of sizes $a_1,\dots,a_k$ the next
accepted edge joins two of them with probability proportional to $a_ia_j$, and
its two endpoints are uniform in their components, independently of the past.
Uniformity of the endpoints is what makes the degree functional computable: if
$\Phi=\sum_x\dg(x)(\dg(x)-1)$ counts ordered pairs of adjacent tree edges, then
a merger of blocks of sizes $a,b$ raises $\E\Phi$ by $8-4/a-4/b$, so that
$\E[\Phi\mid\text{merger history}]=8(n-1)-4H$ with $H$ the sum of $1/a+1/b$ over
the $n-1$ mergers. Both pair probabilities on $K_n$ are therefore governed by
the single functional $H$, adjacent negative correlation asking for it to be
large and disjoint negative correlation for it to be small. A pathwise identity
--- every component other than the last one is the child of exactly one merger
--- rewrites $H$ as $n-\frac1n+J$, where $J$ sums $1/|C|$ over the components $C$
created by mergers. Chronology gives the sharper pathwise estimate
$J\ge H_n-1$: the component created by the $j$th merger has at most $j+1$
vertices. This is Theorem B1. The other direction
needs $J$ bounded above, no pathwise bound is available
(Remark~\ref{rem:notpathwise}), and this is where the multiplicative merger law
enters: a one-step convexity inequality bounds the multiplicatively weighted
average of $1/(a_i+a_j)$ over pairs of blocks by $k/(2n)$, and summing over the
$n-1$ states visited gives $\E[J]\le(n-1)(n+2)/(4n)$, which is Theorem B2.
Finally, in continuous time $H$ accumulates at rate $n(\kappa-1)$, where
$\kappa$ is the number of components, while $L_n=\int_0^\infty(\kappa(t)-1)\,dt$;
so $\E[H]=n\,\E[L_n]$, which is Proposition D1.

\section{Preliminaries}
\label{sec:prelim}

\subsection{The cycle characterisation}

We use one description of the minimum spanning tree throughout. For a real $u$
let $G_{<u}$ denote the subgraph of $G$ consisting of the edges of weight
strictly less than $u$.

\begin{lemma}[Kruskal / cycle property]
\label{lem:kruskal}
Suppose all edge weights of $G$ are distinct. Then for every edge $g$ with
endpoints $x,y$,
\[
g\in\MST(G)\iff x\ \text{and}\ y\ \text{lie in different components of }
G_{<w(g)} .
\]
In particular a loop is never in $\MST(G)$, and a bridge always is.
\end{lemma}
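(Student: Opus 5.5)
We prove the cycle characterisation directly from Kruskal's algorithm, using only that weights are distinct so that the greedy order is a strict total order. Run Kruskal: scan the edges in increasing order of weight and accept an edge exactly when its endpoints lie in different components of the forest $F$ accepted so far. The output is a spanning tree, and by the standard exchange argument it is the unique minimum spanning tree. To keep the proof self-contained we include that exchange argument: if some minimum spanning tree $T'$ omitted the first accepted edge $g$ that is missing from $T'$, then adding $g$ to $T'$ creates a cycle. That cycle contains an edge $h\notin F$ of larger weight, and the swap $T'+g-h$ has strictly smaller weight, which is a contradiction.

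The key invariant is that, when Kruskal reaches edge $g$ of weight $u=w(g)$, the accepted forest $F_{<u}$ (the accepted edges of weight $<u$) has the same connected components as $G_{<u}$. We prove this by induction along the scan order. $F_{<u}\subseteq G_{<u}$ gives one inclusion of the component partitions. Conversely, each edge $h$ of weight $<u$ was either accepted, or rejected because its endpoints were already joined in the forest at that time. In both cases its endpoints are connected in $F_{<u}$. Hence every edge of $G_{<u}$ lies inside a component of $F_{<u}$, and the partitions coincide.

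With the invariant, $g=xy$ is accepted if and only if $x,y$ are in different components of $F_{<u}$, equivalently of $G_{<u}$. This is the claimed equivalence. For a loop, $x=y$ always lies in one component, so a loop is never accepted. For a bridge $g$, removing $g$ disconnects $x$ from $y$, and $G_{<w(g)}\subseteq G-g$, so $x,y$ are separated and the bridge is always accepted.

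There is no real obstacle here. The only care needed is the treatment of ties, which is excluded by distinctness: the strict inequality in $G_{<w(g)}$ matters precisely because $g$ itself has weight $w(g)$. We also check that parallel edges cause no trouble, since the argument never uses simplicity.
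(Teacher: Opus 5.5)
Your argument is correct and takes the same route as the paper: both rely on the correctness of Kruskal's algorithm and identify the components seen by $g$ with those of $G_{<w(g)}$, and your invariant (every lighter edge, accepted or rejected, lies inside a component of $F_{<u}$) makes explicit the paper's brief remark that the edges present when $g$ is processed are exactly $E(G_{<w(g)})$. The one unjustified step is in your optional exchange argument, where you assert rather than derive that the cycle contains a heavier edge $h\notin F$; this holds because a lighter $h\in T'\setminus F$ would have been rejected, so its endpoints would be joined by lighter accepted edges, all of which lie in $T'$ by the choice of $g$, and together with $h$ these would close a cycle inside $T'$.
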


\begin{proof}
This is the standard correctness statement for Kruskal's algorithm, which
processes edges in increasing weight and accepts exactly those joining two
distinct current components; with distinct weights the algorithm's output is the
unique minimum spanning tree, and the set of edges present when $g$ is processed
is exactly $E(G_{<w(g)})$. For a loop, $x=y$ always lie in the same component.
For a bridge $g$, $G_{<w(g)}\subseteq G-g$ never connects $x$ to $y$.
\end{proof}

Since $\MST(G)$ depends on the weight vector only through its order, and since
applying the common distribution function $F_\mu$ to every weight is an
almost surely strictly order-preserving map onto independent
$\mathrm{Uniform}[0,1]$ weights, we may and do assume in \S\ref{sec:universal}
that all weights are uniform on $[0,1]$. In \S\ref{sec:kn} we instead use
$\mathrm{Exp}(1)$ weights, again without changing $\PMST$.

\subsection{Connection times}

\begin{definition}
\label{def:T}
Let $K$ be a finite multigraph with weights in $[0,1]$ and let $x,y$ be
vertices. Set
\[
T_K(x,y)=\inf\{t\in[0,1]:\ x\leftrightarrow y\ \text{in }K_{<t}\},
\]
with the convention $T_K(x,y)=1$ if $x$ and $y$ lie in different components of
$K$, and $T_K(x,x)=0$.
\end{definition}

Equivalently $T_K(x,y)$ is the bottleneck distance from $x$ to $y$, the minimum
over paths of the largest weight on the path (and $1$ if there is no path).
Definition~\ref{def:T} is the threshold relevant to adding a fresh edge: by
Lemma~\ref{lem:kruskal}, if a new edge $xy$ of weight $u$ is added to $K$ then,
outside the null set of ties, it is accepted precisely when $u<T_K(x,y)$.

\section{A universal factor for all finite multigraphs}
\label{sec:universal}

Throughout this section the weights are i.i.d.\ $\mathrm{Uniform}[0,1]$. We
begin with the estimate that ultimately pays for the correlation, and which is
the only place where a percolation argument is used.

\subsection{A second-moment bound for connection times}

\begin{lemma}
\label{lem:nbu}
Let $K$ be a finite multigraph with i.i.d.\ uniform weights, let $x,y$ be
vertices, put $T=T_K(x,y)$ and $q(t)=\PP(T>t)$, extended by $q(t)=0$ for
$t\ge1$. Then
\begin{equation}
\label{eq:nbu}
q(s+t)\le q(s)q(t)\quad(s,t\ge0),
\qquad\text{and consequently}\qquad
\E[T^2]\le2(\E T)^2 .
\end{equation}
\end{lemma}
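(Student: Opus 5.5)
The plan has two parts: first the submultiplicativity $q(s+t)\le q(s)q(t)$ by sprinkling, then the moment bound from the new-better-than-used property.

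\textbf{Step 1: the event $\{T>t\}$.} For $t\in[0,1)$, the event $\{T>t\}$ says that $x$ and $y$ are not connected in $K_{<t}$. This holds up to null sets, since $T$ is the infimum of connection thresholds and the connectivity of $K_{<u}$ is right-continuous in $u$ apart from ties. The open edges of $K_{<t}$ form Bernoulli($t$) bond percolation on $K$, so $q(t)=\PP(x\not\leftrightarrow y\text{ in }\omega_t)$ with $\omega_t$ Bernoulli($t$) percolation. The cases where $s$, $t$ or $s+t$ is at least $1$ are trivial: either $q(s+t)=0$, or one of $q(s)$, $q(t)$ equals $1$ (when $s=0$), and $q\le1$ throughout.

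\textbf{Step 2: sprinkling.} Take two independent percolations $\omega_s$ and $\omega_t'$ and form their union $\omega_s\cup\omega_t'$. This is Bernoulli($1-(1-s)(1-t)$) percolation, and $1-(1-s)(1-t)=s+t-st\le s+t$. Percolation is monotone and disconnection is a decreasing event, so
\[
q(s+t)\le \PP\bigl(x\not\leftrightarrow y \text{ in } \omega_s\cup\omega_t'\bigr)\le \PP\bigl(x\not\leftrightarrow y\text{ in }\omega_s,\ x\not\leftrightarrow y\text{ in }\omega_t'\bigr)=q(s)q(t),
\]
where the last equality uses independence. This is the whole percolation input. The case $x=y$ gives $T=0$ and is trivial.

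\textbf{Step 3: the moment bound.} Write $\E T=\int_0^\infty q$ and $\E[T^2]=2\int_0^\infty t\,q(t)\,dt=2\int\!\!\int_{0\le s\le t}q(t)\,ds\,dt$. Substituting $t=s+u$ with $u\ge0$ turns this into $\E[T^2]=2\int_0^\infty\!\int_0^\infty q(s+u)\,du\,ds$. Submultiplicativity then gives
\[
\E[T^2]\le 2\int_0^\infty\!\int_0^\infty q(s)q(u)\,du\,ds=2(\E T)^2.
\]
The change of variables is routine and $T$ is bounded, so all the integrals are finite.

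\textbf{Main obstacle.} The only delicate point is Step 1, namely that $\{T>t\}$ coincides almost surely with non-connection in the strict-threshold percolation $K_{<t}$. It needs care with the infimum convention and with $T=1$ when $x$ and $y$ are disconnected. Since the weights are atomless, $\PP(T=t)$ can be positive only at $t=1$, and the extension $q(t)=0$ for $t\ge1$ matches $\PP(T>t)$ because $T\le1$. The direction $\omega_s\cup\omega'_t\subseteq\omega_{s+t}$ is realised by coupling through monotonicity in the parameter, which is standard.
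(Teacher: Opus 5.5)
Your proof is correct and follows the paper's argument. You sprinkle two independent percolation layers to obtain submultiplicativity of the non-connection probability, then use the Tonelli change of variables $\E[T^2]=2\iint q(s+u)\,du\,ds\le 2(\E T)^2$. The only cosmetic difference is that you take layers of densities $s$ and $t$, whose union has density $s+t-st\le s+t$, and absorb the slack by monotonicity in the parameter; the paper instead uses layers $s$ and $t/(1-s)$ to hit density $s+t$ exactly, and then uses $q(t/(1-s))\le q(t)$.
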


\begin{proof}
For $s+t\ge1$ the left side vanishes, so assume $s+t<1$. For $u\in[0,1]$ the
subgraph $K_{<u}$ is Bernoulli($u$) percolation on $E(K)$, and $q(u)$ is the
probability that $x$ and $y$ are not connected in it. Realise Bernoulli($s+t$)
percolation as the union of two independent layers of densities $s$ and
$r=t/(1-s)$: an edge is absent from the union with probability
$(1-s)(1-r)=1-s-t$, as required. If the union fails to connect $x$ to $y$, then
each layer separately fails to connect them. The two layers are independent, $q$
is nonincreasing, and $r\ge t$; hence
\[
q(s+t)\le q(s)q(r)\le q(s)q(t).
\]
This also covers the case in which $x,y$ lie in different components of $K$,
where $q\equiv1$ on $[0,1)$. For the second assertion, all integrals below are
over $[0,\infty)$ and $q$ vanishes on $[1,\infty)$, so by Tonelli
\[
(\E T)^2=\int\!\!\int q(s)q(t)\,ds\,dt
\ \ge\ \int\!\!\int q(s+t)\,ds\,dt
=\int u\,q(u)\,du=\tfrac12\,\E[T^2] .\qedhere
\]
\end{proof}

The submultiplicativity in \eqref{eq:nbu} is the two-layer sprinkling
decomposition behind the classical square-root trick, and its consequence is the
standard second-moment inequality for a new-better-than-used random variable;
see e.g.\ \cite{BarlowProschan}.

\begin{remark}
\label{rem:sharp2}
The constant $2$ in \eqref{eq:nbu} cannot be improved. If $K$ consists of $k$
parallel $xy$-edges then $T$ is the minimum of $k$ uniforms, so $\E T=1/(k+1)$,
$\E[T^2]=2/\bigl((k+1)(k+2)\bigr)$ and
\[
\frac{\E[T^2]}{(\E T)^2}=\frac{2(k+1)}{k+2}\ \nearrow\ 2 .
\]
\end{remark}

\subsection{Freezing the environment}

Fix distinct edges $e\ne f$ of $G$, write $H=G-\{e,f\}$ for the multigraph on the
same vertex set with $e$ and $f$ deleted, and put $U=w(e)$, $V=w(f)$. Let
$\mathcal{F}=\sigma\bigl(w(g):g\in E(H)\bigr)$ be the $\sigma$-field generated by
the \emph{environment}, i.e.\ by all weights other than those of the two marked
edges. For a tie-free realisation $W$ of the environment, which occurs almost
surely, set
\begin{equation}
\label{eq:abc}
a(W)=\PP(e\in T\mid \mathcal F=W),\quad
b(W)=\PP(f\in T\mid \mathcal F=W),\quad
c(W)=\PP(e,f\in T\mid \mathcal F=W),
\end{equation}
where the conditional probabilities average only over the independent uniform
pair $(U,V)$.

\begin{lemma}[conditional negative correlation]
\label{lem:harris}
For every tie-free environment $W$,
\[
c(W)\le a(W)\,b(W).
\]
\end{lemma}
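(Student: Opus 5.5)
With the environment $W$ frozen and tie-free, the plan is to apply Harris's inequality for product measures to the two independent coordinates $U=w(e)$ and $V=w(f)$. Set
\[
\phi(u,v)=\mathbf 1\{e\in T\}, \qquad \psi(u,v)=\mathbf 1\{f\in T\},
\]
for the weights $U=u$, $V=v$ with the environment fixed at $W$. Then
\[
a(W)=\E[\phi(U,V)],\qquad b(W)=\E[\psi(U,V)],\qquad c(W)=\E[\phi\psi(U,V)].
\]
It therefore suffices to show that $\phi$ is nonincreasing in $u$ and nondecreasing in $v$, while $\psi$ is nondecreasing in $u$ and nonincreasing in $v$. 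Harris's inequality for two functions on $[0,1]^2$ that are monotone in opposite directions in each coordinate then gives $\operatorname{Cov}(\phi,\psi)\le 0$, which is exactly $c(W)\le a(W)b(W)$. For ties we discard the null set where $u$ or $v$ equals an environment weight or $u=v$. This does not affect the integrals.

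\textbf{Monotonicity.} Let $e$ have endpoints $x,y$. By Lemma~\ref{lem:kruskal}, $\phi(u,v)=1$ iff $x,y$ lie in different components of $G_{<u}$. The graph $G_{<u}$ consists of $H_{<u}$, together with $f$ when $v<u$; the edge $e$ itself is never present, since its weight is not $<u$.
\begin{itemize}
\item Raising $v$ can only remove $f$ from $G_{<u}$, so it can only disconnect $x$ from $y$. Hence $\phi$ is nondecreasing in $v$.
\item Raising $u$ enlarges $H_{<u}$ and can only add $f$, so $G_{<u}$ grows monotonically as a subgraph. Connectivity of $x,y$ is monotone under adding edges, so $\phi$ is nonincreasing in $u$.
\end{itemize}
The same argument with the roles of $e$ and $f$ exchanged gives the monotonicity of $\psi$. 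Loops are harmless: for a loop $\phi\equiv 0$, which is monotone. Parallel copies and multigraph structure do not matter, because only the connectivity of $G_{<u}$ is used.

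\textbf{Harris step.} Harris's inequality is usually stated for functions monotone in the same direction. To match that form, reparametrise $v\mapsto 1-v$: under this change $\phi$ and $-\psi$ are both coordinatewise nonincreasing in $(u,1-v)$, and the product measure is unchanged. If a self-contained two-coordinate argument is preferred, condition on $U$ first. For fixed $u$, the functions $\phi(u,\cdot)$ and $\psi(u,\cdot)$ are monotone in opposite directions in $v$, so Chebyshev's one-dimensional inequality gives
\[
\E[\phi\psi\mid U]\le \E[\phi\mid U]\,\E[\psi\mid U].
\]
The conditional means $\E[\phi\mid U=u]$ and $\E[\psi\mid U=u]$ are again oppositely monotone in $u$, so applying Chebyshev's inequality once more bounds $\E\bigl[\E[\phi\mid U]\,\E[\psi\mid U]\bigr]$ by $a(W)b(W)$.

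The only real content is the monotonicity step, and the one subtlety there is checking that increasing $u$ makes the graph $G_{<u}$ grow as a subgraph; this holds because $f$'s membership condition $v<u$ is itself monotone in $u$. I expect no further obstacle.
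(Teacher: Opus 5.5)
Your proof is correct and is essentially the paper's argument: the same monotonicity of the two acceptance indicators in $(U,V)$ via Lemma~\ref{lem:kruskal}, followed by Harris's inequality after the change of coordinates to $(U,1-V)$. Your alternative ``Chebyshev twice'' derivation is just the standard proof of Harris's inequality on a product of two totally ordered spaces, so it makes the argument self-contained without adding a new ingredient.
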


\begin{proof}
Fix $W$ and regard the acceptance indicators
$\mathbf 1_{\{e\in T\}}$ and $\mathbf 1_{\{f\in T\}}$ as functions of
$(U,V)\in[0,1]^2$. By Lemma~\ref{lem:kruskal}, $e$ is accepted precisely when its
endpoints are not connected by the edges of weight $<U$, that is by
\[
\{g\in E(H):w(g)<U\}\ \cup\ \{f\ \text{if } V<U\}.
\]
Increasing $U$ only enlarges this edge set, so $\mathbf 1_{\{e\in T\}}$ is
nonincreasing in $U$; increasing $V$ can only remove $f$ from it, so
$\mathbf 1_{\{e\in T\}}$ is nondecreasing in $V$. By symmetry
$\mathbf 1_{\{f\in T\}}$ is nondecreasing in $U$ and nonincreasing in $V$.

Now change coordinates to $(U,1-V)$, which are again independent and uniform. In
these coordinates $\mathbf 1_{\{e\in T\}}$ is coordinatewise nonincreasing and
$\mathbf 1_{\{f\in T\}}$ is coordinatewise nondecreasing. Harris's inequality
\cite{Harris} on a product of two totally ordered probability spaces gives
nonpositive covariance for a decreasing and an increasing function, which is
exactly $c(W)\le a(W)b(W)$.
\end{proof}

The conditioning is essential: by \eqref{eq:lps}, the unconditional failure
lives in the covariance of the two functions $a$ and $b$. We compare them with
connection times in the graph with both marked edges deleted.

\subsection{A deletion sandwich}
\label{sec:sandwich}

\begin{lemma}
\label{lem:sandwich}
Let $e$ have endpoints $x_1,x_2$ and $f$ have endpoints $y_1,y_2$, and set
\[
A=A(W)=T_H(x_1,x_2),\qquad B=B(W)=T_H(y_1,y_2)
\]
for the connection times in the deleted environment $H$
(Definition~\ref{def:T}). Then, for every tie-free $W$,
\[
\tfrac A2\ \le\ A-\tfrac{A^2}2\ \le\ a(W)\ \le\ A,
\qquad
\tfrac B2\ \le\ B-\tfrac{B^2}2\ \le\ b(W)\ \le\ B .
\]
\end{lemma}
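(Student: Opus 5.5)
By symmetry it suffices to prove the bounds for $a(W)$; the argument for $b(W)$ is identical with the roles of $e$ and $f$ exchanged. The plan is to compute $a(W)$ directly from Lemma~\ref{lem:kruskal} by splitting on the position of $V$ relative to $U$. Fix a tie-free environment $W$. Given $U$ and $V$, the edge $e$ is accepted precisely when $x_1$ and $x_2$ are not joined in $H_{<U}$ together with $f$ whenever $V<U$. This condition requires at least that $H_{<U}$ does not join $x_1$ to $x_2$, which means $U<A$ (ties have probability zero). Averaging over $U$ then gives the upper bound $a(W)\le\PP(U<A)=A$. The two leftmost inequalities, $\tfrac A2\le A-\tfrac{A^2}{2}$, are simply $A\le1$.

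For the lower bound, work on the event $\{U<A\}$. If $V>U$, then $f$ is absent from the graph of lighter edges, so $e$ is accepted. Hence
\[
a(W)\ \ge\ \PP(U<A,\ V>U)=\int_0^A(1-u)\,du=A-\tfrac{A^2}{2}.
\]
The condition $V>U$ is only sufficient, since $e$ may also be accepted with $V<U$ when $f$ does not close the connection; the resulting inequality is exactly the one claimed. The case $A=1$ covers disconnected $H$ and bridges, and the computation is unchanged there, because $T_H=1$ means $H_{<u}$ never connects for $u<1$.

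There is no serious obstacle. The only points needing care are that $T_H$ uses the strict inequality $H_{<t}$ and the convention $T_H=1$, so that $\{U<A\}$ coincides almost surely with ``$x_1,x_2$ not connected in $H_{<U}$''. This holds because a tie-free $W$ makes $\{u:x_1\leftrightarrow x_2\text{ in }H_{<u}\}$ an interval of the form $(A,1]$. The degenerate cases need no separate treatment: if $e$ is a loop, then $A=T_H(x,x)=0$ and both sides vanish.
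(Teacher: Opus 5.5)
Your proof is correct and follows essentially the same route as the paper: the upper bound comes from the fact that acceptance forces $H_{<U}$ not to connect $x_1,x_2$, i.e.\ $U<A$ almost surely, and the lower bound comes from the event $\{U<\min(A,V)\}$, whose probability you compute as $\int_0^A(1-u)\,du$ where the paper computes $\int_0^1\min(A,v)\,dv$. Your remarks on the strict-inequality convention in $T_H$ and on the degenerate cases (loops with $A=0$, and $A=1$ for bridges or disconnected $H$) are accurate and consistent with the paper's treatment.
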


\begin{proof}
Upper bound. Adding $f$ to $H$ can only connect $x_1$ to $x_2$ at a smaller or
equal threshold, so if $e$ is accepted then, outside the null set $\{U=A\}$, we
have $U<A$; the set $\{U<A\}\subseteq[0,1]^2$ has area $A$.

Lower bound. If $U<\min(A,V)$ then $f$ is not among the edges of weight $<U$, and
$H$ alone does not connect $x_1$ to $x_2$ below $U$ because $U<A$; so $e$ is
accepted by Lemma~\ref{lem:kruskal}. The area of $\{U<\min(A,V)\}$ is
\[
\int_0^1\min(A,v)\,dv=\frac{A^2}2+A(1-A)=A-\frac{A^2}2\ \ge\ \frac A2,
\]
the last step because $A\le1$. The statements for $f$ follow by exchanging the
roles of $e$ and $f$.
\end{proof}

\subsection{Proof of Theorem A}

We combine conditional negative correlation, the deletion sandwich, and the
connection-time moment bound.

\begin{proof}[Proof of Theorem A]
By \S\ref{sec:prelim} we may take the weights uniform on $[0,1]$. With the
notation of \eqref{eq:abc} and Lemma~\ref{lem:sandwich}, and taking all
expectations with respect to the environment,
\[
\PMST(e,f\in T)=\E[c]
\ \overset{\text{L.\ref{lem:harris}}}{\le}\ \E[ab]
\ \overset{\text{L.\ref{lem:sandwich}}}{\le}\ \E[AB]
\ \le\ \sqrt{\E[A^2]\,\E[B^2]}
\ \overset{\text{L.\ref{lem:nbu}}}{\le}\ 2\,\E[A]\,\E[B],
\]
where the third step is Cauchy--Schwarz and the fourth applies
Lemma~\ref{lem:nbu} separately to the two terminal pairs $\{x_1,x_2\}$ and
$\{y_1,y_2\}$ in the multigraph $H$ with its i.i.d.\ uniform weights. Finally
Lemma~\ref{lem:sandwich} gives
$\E[A]\le2\E[a]=2\,\PMST(e\in T)$ and likewise $\E[B]\le2\,\PMST(f\in T)$, so
\[
\PMST(e,f\in T)\le2\cdot2\,\PMST(e\in T)\cdot2\,\PMST(f\in T)
=8\,\PMST(e\in T)\,\PMST(f\in T).\qedhere
\]
\end{proof}

The same estimates improve the constant when both marked edges are rare.

\begin{corollary}[small marginals]
\label{cor:small-marginals}
Put $g(p)=(1-\sqrt{1-4p})/2$. If $p_e,p_f\le1/4$, then
\[
p_{ef}\le 2g(p_e)g(p_f).
\]
If in addition $p_ep_f>0$, then
\[
\frac{p_{ef}}{p_ep_f}\le
2\frac{g(p_e)}{p_e}\frac{g(p_f)}{p_f}.
\]
Consequently the upper bound on the ratio is $2+o(1)$ as
$\max(p_e,p_f)\to0$.
\end{corollary}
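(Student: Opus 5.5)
Rerun the chain from the proof of Theorem A, but use the sharper left-hand inequality of Lemma~\ref{lem:sandwich}. The chain $p_{ef}\le\E[ab]\le\E[AB]\le\sqrt{\E[A^2]\E[B^2]}\le2\,\E[A]\,\E[B]$ is unchanged; only the final conversion of $\E[A]$ into $p_e$ changes. Instead of $a\ge A/2$, we keep $a\ge A-A^2/2$, so
\[
p_e=\E[a]\ \ge\ \E[A]-\tfrac12\E[A^2].
\]
Now $\E[A^2]\le2(\E A)^2$ by Lemma~\ref{lem:nbu}. Writing $m=\E[A]$, this gives $p_e\ge m-m^2$, and likewise $p_f\ge m'-m'^2$ with $m'=\E[B]$.

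The second step inverts the quadratic. The map $m\mapsto m-m^2$ is increasing on $[0,1/2]$ and reaches its maximum $1/4$ there. Its inverse on that branch is exactly $g(p)=(1-\sqrt{1-4p})/2$. The point that needs care is showing that $m$ lies on the lower branch, i.e.\ $m\le1/2$, so that we get $m\le g(p_e)$ rather than only $m\le g(p_e)$ or $m\ge1-g(p_e)$. This is the main obstacle.

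My first idea was to use $A\le1$ and the condition $p_e\le1/4$, but that alone does not force $m\le1/2$. A cleaner route is to avoid branches entirely. Jensen's inequality gives $\E[A-A^2/2]\ge$ nothing useful directly, so instead I would use convexity on a refined bound. Since $a\ge A-A^2/2$ and $A\mapsto A-A^2/2$ is increasing on $[0,1]$, a bound of the form $m\le1/2$ should follow from $p_e\le1/4$ via the upper bound $a\le A$. That gives only $m\ge p_e$, so this also fails.

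I would therefore keep the two-branch statement and argue by continuity, or else simply derive the inequality in the form $m-m^2\le p_e$. The function $m-m^2$ is symmetric about $1/2$, so if $m>1/2$ we can still bound $mm'$ by handling that case separately. In that case $A$ is fairly large, and I would try $\E[A^2]\ge m^2$ together with $p_e\ge m-\E[A^2]/2\ge m-1/2$. If this branch case cannot be closed, I would aim to prove $m\le1/2$ directly from $q$ being submultiplicative.

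The third step, with $m\le g(p_e)$ and $m'\le g(p_f)$ in hand, is immediate: $p_{ef}\le2\,g(p_e)\,g(p_f)$. Dividing by $p_ep_f$ gives the ratio form. Since $g(p)/p=1+p+O(p^2)\to1$ as $p\to0$, the ratio bound tends to $2$, which gives the $2+o(1)$ statement.
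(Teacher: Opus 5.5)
There is a genuine gap, and it sits exactly at the step you flag as the main obstacle. The rest of your proposal matches the paper: the chain $p_{ef}\le 2\,\E[A]\,\E[B]$, the bound $p_e\ge m-m^2$ from Lemma~\ref{lem:sandwich} and Lemma~\ref{lem:nbu}, the inversion through $g$, and the limit $g(p)/p\to1$. But you never show that $m=\E[A]$ lies on the increasing branch $[0,1/2]$. The continuity argument, the case split, and the submultiplicativity idea are only floated, not carried out. As written, the argument therefore gives only the disjunction $m\le g(p_e)$ or $m\ge 1-g(p_e)$, which does not bound $p_{ef}$.

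Your assertion that $A\le1$ together with $p_e\le1/4$ does not force $m\le1/2$ is also mistaken. Since $0\le A\le1$, we have $A^2\le A$. Hence $a\ge A-A^2/2\ge A/2$ pointwise; this is the outer inequality already stated in Lemma~\ref{lem:sandwich}. Taking expectations gives $p_e=\E[a]\ge m/2$, so $m\le 2p_e\le 1/2$.

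In your attempted case analysis you bounded $\E[A^2]\le 1$, which only yields $p_e\ge m-1/2$. The useful bound is $\E[A^2]\le\E[A]=m$, which yields $p_e\ge m/2$ directly.

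Once $m\in[0,1/2]$ is known, the map $m\mapsto m-m^2$ is increasing there with inverse $g$. So $m-m^2\le p_e$ gives $m\le g(p_e)$, likewise $m'\le g(p_f)$, and the rest of your proof goes through. This is exactly the paper's route: it uses both lower bounds from Lemma~\ref{lem:sandwich}, the crude one $a\ge A/2$ to place $m$ on the correct branch and the sharp one $a\ge A-A^2/2$, with $\E[A^2]\le2(\E A)^2$, to get the quadratic inequality.
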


\begin{proof}
Write $m_A=\E[A]$ and $m_B=\E[B]$. Lemmas \ref{lem:sandwich} and
\ref{lem:nbu} give
\[
p_e=\E[a]\ge m_A-\tfrac12\E[A^2]\ge m_A-m_A^2,
\qquad p_e\ge\tfrac12m_A,
\]
and likewise for $f$. If $p_e\le1/4$, the second inequality gives
$m_A\le1/2$, and inversion of the increasing function $m\mapsto m-m^2$ on
$[0,1/2]$ gives $m_A\le g(p_e)$. The proof of Theorem A already gives
$p_{ef}\le2m_Am_B$, which proves the assertions; finally $g(p)/p\to1$ as
$p\downarrow0$.
\end{proof}

\subsection{Degenerate configurations}
\label{sec:degenerate}

No simplicity or $2$-connectivity hypothesis has entered, and the degenerate
cases are covered rather than excluded. If $e$ is a loop then $x_1=x_2$, so
$A=0$, $a\equiv0$ and $p_e=p_{ef}=0$, and the inequality reads $0\le0$. If $e$
is a bridge of $G$ then $H$ does not connect $x_1$ to $x_2$, so $A=1$ and
$p_e=1$; the chain above is valid, and one should expect no strictness, since
two bridges give $p_{ef}=p_ep_f=1$. If $e$ and $f$ have the same endpoints then
$p_{ef}=0$, because the heavier of the two closes a cycle with the lighter, and
Lemma~\ref{lem:harris} remains valid. Finally $H=G-\{e,f\}$ need not be
connected; Definition~\ref{def:T} and Lemma~\ref{lem:nbu} were set up to cover
this ($T=1$), and Lemma~\ref{lem:sandwich} is pointwise, so nothing changes.

\section{Explicit correlation examples}
\label{sec:k4}

We first quantify the parallel-bundle construction of \cite{LPS}, then give a
simple-graph counterexample to pairwise negative correlation, and finally show
why the common-law hypothesis of Theorem A is necessary. The two $K_4$
families use the following four-terminal calculation.

\subsection{A four-terminal formula}

Label the vertices of $K_4$ by $a,b,c,d$ and distinguish the two disjoint edges
\[
\mathsf A=ab,\qquad \mathsf B=cd .
\]
The remaining four edges $ac,ad,bc,bd$ form a complete bipartite graph between
$\{a,b\}$ and $\{c,d\}$; give them a common continuous distribution function $H$,
and give $\mathsf A,\mathsf B$ the continuous distribution functions $F,G$, all
six weights independent. Write $\bar H=1-H$.

\begin{lemma}
\label{lem:fourterminal}
With the above notation,
\begin{align}
p_{\mathsf A}&=\int\Bigl[(1-G(x))\bigl(1-H(x)^2\bigr)^2
   +G(x)\bigl\{2\bar H(x)^2-\bar H(x)^4\bigr\}\Bigr]\,dF(x),
\label{eq:pA}\\
p_{\mathsf B}&=\text{the same with }(F,\mathsf A)\text{ and }(G,\mathsf B)
   \text{ interchanged},
\label{eq:pB}\\
p_{\mathsf{AB}}&=\iint_{x<y}\bar H(y)^2\bigl[2(1-H(x)^2)-\bar H(y)^2\bigr]
   \,dF(x)\,dG(y)\notag\\
&\qquad{}+\iint_{y<x}\bar H(x)^2\bigl[2(1-H(y)^2)-\bar H(x)^2\bigr]
   \,dF(x)\,dG(y).
\label{eq:pAB}
\end{align}
\end{lemma}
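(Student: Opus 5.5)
The plan is to compute each probability by conditioning on the weights of the marked edges and reading off acceptance from Lemma~\ref{lem:kruskal}. Given $w(\mathsf A)=x$ and $w(\mathsf B)=y$, the four bipartite edges $ac,ad,bc,bd$ are independent, and each is present in $G_{<u}$ with probability $H(u)$. Since $K_4$ has only four vertices, every connectivity question reduces to a short case check on these four edges. In all three formulas, ties between $x$ and $y$ form a null set and will be ignored.

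For \eqref{eq:pA} I fix $w(\mathsf A)=x$ and split according to whether $\mathsf B$ is present below $x$. If $y>x$, which has probability $1-G(x)$, then $G_{<x}$ consists only of bipartite edges. Here $a$ and $b$ are connected exactly when some path $a\!-\!c\!-\!b$ or $a\!-\!d\!-\!b$ is present. These two paths use disjoint edges, so the non-connection probability is $(1-H(x)^2)^2$. If $y<x$, which has probability $G(x)$, then $c$ and $d$ are already merged. Now $a$ reaches $\{c,d\}$ unless both $ac$ and $ad$ are absent, which happens with probability $1-\bar H(x)^2$, and the same holds independently for $b$. So $a$ and $b$ are separated with probability $1-(1-\bar H^2)^2=2\bar H^2-\bar H^4$. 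Integrating against $dF(x)$ gives \eqref{eq:pA}, and \eqref{eq:pB} follows by the symmetry of the configuration.

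For \eqref{eq:pAB} I take $x<y$; the case $y<x$ is symmetric with the roles swapped. The edge $\mathsf A$ is processed first, and $G_{<x}$ contains no $\mathsf B$. So $\mathsf A$ is accepted iff neither $\{ac,bc\}$ nor $\{ad,bd\}$ lies entirely below $x$. When $\mathsf B$ is processed, $G_{<y}$ contains $\mathsf A$, so $a$ and $b$ are merged. Hence $\mathsf B$ is accepted iff $c$ and $d$ are not both attached to $\{a,b\}$ below $y$. That is, iff at least one of $c,d$ has both of its bipartite edges heavier than $y$.

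I then apply inclusion--exclusion over the events $I_c$ ($ac,bc>y$) and $I_d$ ($ad,bd>y$). On $I_c$ the path through $c$ is automatically blocked below $x<y$. The $\mathsf A$-condition then depends only on $d$'s edges, giving the factor $1-H(x)^2$, and these edges are independent of $I_c$. This yields $\bar H(y)^2(1-H(x)^2)$ for $I_c$, the same for $I_d$, and $\bar H(y)^4$ for $I_c\cap I_d$. Combining,
\[
2\bar H(y)^2(1-H(x)^2)-\bar H(y)^4=\bar H(y)^2\bigl[2(1-H(x)^2)-\bar H(y)^2\bigr].
\]
The only delicate point is this joint case, where I must check that on $I_c$ the $\mathsf A$-event really decouples as claimed. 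I will verify this by recording which edges each event depends on: $I_c$ involves only $ac,bc$, and the remaining factor involves only $ad,bd$. Integrating over $x<y$ and adding the mirror case gives \eqref{eq:pAB}.
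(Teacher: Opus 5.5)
Your proposal is correct and follows the paper's proof essentially step for step: the same conditioning on $w(\mathsf A)=x$ with a case split on whether $\mathsf B$ lies below $x$, and, for $p_{\mathsf{AB}}$, the same description of the two acceptance events. Your inclusion--exclusion over $I_c,I_d$, which uses that $I_u$ forces $u$ not to receive both cross edges below $x$, is only a rearrangement of the paper's disjoint decomposition $v^4+2v^2(q-v^2)=2qv^2-v^4$.
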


\begin{proof}
For \eqref{eq:pA}, condition on $w(\mathsf A)=x$; by Lemma~\ref{lem:kruskal},
$\mathsf A$ is accepted iff $a\not\leftrightarrow b$ below $x$.

If $w(\mathsf B)>x$, which has probability $1-G(x)$, only cross edges are
present below $x$, and $a\leftrightarrow b$ below $x$ iff $c$ receives both of
$ac,bc$ below $x$, or $d$ receives both of $ad,bd$ below $x$. These two events
are independent, each of probability $H(x)^2$, so the probability of
non-connection is $(1-H(x)^2)^2$.

If $w(\mathsf B)<x$, of probability $G(x)$, then $c$ and $d$ are already merged,
and $a\leftrightarrow b$ below $x$ iff $a$ has at least one cross edge below $x$
\emph{and} $b$ has at least one; those events are independent with probability
$1-\bar H(x)^2$ each. The probability of non-connection is
$1-(1-\bar H(x)^2)^2=2\bar H(x)^2-\bar H(x)^4$.

For \eqref{eq:pAB}, take the ordering $x=w(\mathsf A)<y=w(\mathsf B)$ and write
$q=1-H(x)^2$, $v^2=\bar H(y)^2$. For each $u\in\{c,d\}$ let $N_u$ be the event
that $u$ has no cross edge below $y$ (probability $v^2$) and $B_u$ the event
that $u$ does not receive both of its cross edges below $x$ (probability $q$);
$N_u\subseteq B_u$ since $x<y$, and the pairs of events at $c$ and at $d$ are
independent. As above, $\mathsf A$ is accepted iff $B_c\cap B_d$ occurs. Given
that $\mathsf A$ has been accepted and lies below $y$, the vertices $a,b$ are
merged, and $\mathsf B$ is accepted iff at most one of $c,d$ is attached to that
merged block below $y$, i.e.\ iff $N_c\cup N_d$ occurs. Hence
\[
\PP(\mathsf A,\mathsf B\ \text{accepted}\mid x,y)
=\PP\bigl((B_c\cap B_d)\cap(N_c\cup N_d)\bigr)
=v^4+2v^2(q-v^2)=2qv^2-v^4 ,
\]
which is the first integrand of \eqref{eq:pAB}. The other order is symmetric.
\end{proof}

\subsection{Parallel bundles, and a lower bound for the optimal constant}

Enlarging the two parallel bundles in the construction of \cite{LPS} does not
increase the ratio indefinitely; the exact maximum is finite. Let $R_{r,s}$
denote the ratio for one marked copy in
each of two disjoint parallel bundles of sizes $r,s$ on $K_4$, the four cross
edges uniform. Regard each bundle as a macro-edge whose weight is its minimum.
If a macro-edge is selected, its unique lightest copy is selected, and symmetry
therefore divides its marginal by $r$ or $s$ and the joint probability by
$rs$; these factors cancel in the ratio. Substituting the laws of
$\min(U_1,\dots,U_r)$ and $\min(V_1,\dots,V_s)$ into
Lemma~\ref{lem:fourterminal}, and then applying these symmetry factors, gives,
writing $D=(r+s+2)(r+s+3)(r+s+4)$ and
$I(r,s)=\frac4{(s+2)(r+s+3)}-\frac{3s+10}{(s+2)(s+4)(r+s+4)}$,
\[
p_e=\frac{r+6}{(r+2)(r+4)}+\frac4D,\qquad
p_f=\frac{s+6}{(s+2)(s+4)}+\frac4D,\qquad
p_{ef}=I(r,s)+I(s,r).
\]

\begin{proposition}
\label{prop:bundle}
For all positive integers $r,s$,
\[
R_{r,s}=\frac{p_{ef}}{p_ep_f}\ \le\ \frac{78100}{77841}=1.003327\ldots,
\]
with equality exactly at $r=s=4$. In the symmetric subfamily,
\begin{gather*}
R_{t,t}=\frac{(t+1)^2(t+4)(2t+3)(2t^2+19t+34)}{(2t^3+17t^2+34t+22)^2},\\[2pt]
R_{t,t}-1=\frac{20t^3+9t^2-78t-76}{(2t^3+17t^2+34t+22)^2}=\frac5{t^3}+O(t^{-4}),
\end{gather*}
so the ratio peaks at $t=4$ and returns to $1$. The configuration of \cite{LPS}
is $(r,s)=(3,3)$, with $R_{3,3}=109872/109561$.
\end{proposition}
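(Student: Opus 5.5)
The statement is a rational-function inequality in two integer variables, so the plan is to make everything explicit and reduce it to a finite check plus a polynomial sign analysis. First I will verify the closed forms for $p_e,p_f,p_{ef}$ quoted before the proposition. To do so, substitute $F(x)=1-(1-x)^r$, $G(y)=1-(1-y)^s$ and $H(x)=x$ (so $\bar H(x)=1-x$) into Lemma~\ref{lem:fourterminal}. Each integrand is then a polynomial in $x,y$ times $(1-x)^{r-1}(1-y)^{s-1}$. The integrals reduce to Beta integrals, since the region $x<y$ can be handled by integrating $y$ from $x$ to $1$ first. After dividing by $r$, $s$ and $rs$ respectively, they give the stated expressions. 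As a check, I will confirm that $(r,s)=(3,3)$ reproduces \eqref{eq:lps}.

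For the symmetric subfamily I will set $r=s=t$ and bring $p_{ef}=2I(t,t)$ and $p_e=p_f$ to common denominators. For $p_e$, this means combining $\frac{t+6}{(t+2)(t+4)}$ with $4/((2t+2)(2t+3)(2t+4))$. I expect $p_e=(2t^3+17t^2+34t+22)/\bigl((t+1)(t+2)(t+4)(2t+3)\bigr)$ or similar, which factors so as to give the displayed $R_{t,t}$. Then $R_{t,t}-1$ is obtained by subtracting and expanding numerators, and its leading behaviour is read off as $20t^3/(4t^6)=5/t^3$. The sign of $20t^3+9t^2-78t-76$ shows $R_{t,t}<1$ for $t\le1$ and $R_{t,t}>1$ for $t\ge2$. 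Evaluating the sequence at $t=1,\dots,6$ should show that it peaks at $t=4$ with value $78100/77841$. Monotonic decrease for $t\ge4$ follows from the sign of the numerator of $R_{t,t}-R_{t+1,t+1}$, a polynomial with positive coefficients after shifting $t=4+u$.

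The main obstacle is the two-variable bound $R_{r,s}\le 78100/77841$ with equality only at $(4,4)$. I would write $R_{r,s}=N(r,s)/D(r,s)$ as an explicit ratio of polynomials, using the common denominator from $(r+2)(r+4)(s+2)(s+4)(r+s+2)(r+s+3)(r+s+4)$. The goal is then to show that $P(r,s)=78100\,D(r,s)-77841\,N(r,s)\ge0$ on positive integers. The strategy I would try first is to prove a decay bound, $R_{r,s}-1\le C/\bigl(\min(r,s)^{2}\max(r,s)\bigr)$ or even the crude estimate $R_{r,s}<1$ once $\max(r,s)\ge M$ for an explicit $M$ (say $M\approx 20$). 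This could come from an asymptotic expansion in $1/\max(r,s)$ with controlled remainder. Alternatively, substitute $r=M+u$ with $u\ge0$ and check that $P$, or $D-N$, has nonnegative coefficients as a polynomial in $u$ and $s$. Once a cutoff is established, the remaining cases $1\le r,s\le M$ form a finite table of exact rational evaluations. In that table, $(4,4)$ is the unique maximiser, and its neighbours, e.g.\ $(3,4)$, $(4,5)$, $(3,5)$, are strictly smaller. The risk is that the positivity-after-shift step fails for a small cutoff, in which case I would increase $M$ or split into the regimes $s\le r$ and $s>r$ using the symmetry $R_{r,s}=R_{s,r}$.
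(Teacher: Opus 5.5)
Your derivation of the closed forms and your treatment of the symmetric subfamily are fine, apart from one slip. At $t=2$ we have $20t^3+9t^2-78t-76=-36$, so $R_{t,t}>1$ only from $t=3$ on; indeed $R_{2,2}=840/841$.

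The gap is the two-variable bound, which is the whole content of the proposition, and the concrete cutoff you propose is false. Positive correlation in this family is not confined to a bounded set of $(r,s)$. Already for $r=1$ the quoted formulas give
\[
R_{1,s}-1=\frac{8\,(s^4+11s^3+20s^2-137s-345)}{(7s^3+84s^2+329s+480)(s^3+14s^2+67s+98)},
\]
which is positive for every $s\ge4$ and behaves like $8/(7s^2)$; for example $R_{1,10}=15379/15336$. So no $M$ makes $R_{r,s}<1$ for $\max(r,s)\ge M$. For the same reason, no nonnegative-coefficient certificate for $D-N$ after $r=M+u$ can exist, since by symmetry $D-N<0$ at $s=1$.

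Moreover, the edge $\min(r,s)=1$ is nearly extremal. We have $R_{1,7}=4136/4123$, so $R_{1,7}-1\approx0.003153$, against the available margin $259/77841\approx0.003327$. A decay bound of the form $C/(\min(r,s)^2\max(r,s))$ would need an explicit constant valid uniformly in $\min(r,s)$, and proving that is exactly the work you defer. The symmetric family alone forces $C\ge5$, so along $\min(r,s)=1$ the resulting cutoff is near $\max(r,s)\approx1500$, not $20$. Coefficient positivity of $P$ after a shift remains an untested hope. As written, the main inequality and its equality case are not proved.

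What the paper uses, and what your plan lacks, is a way to locate the maximum on each slice rather than bounding $R$ globally. Because $R_{r,s}=R_{s,r}$, $R$ is a rational function of $m=r+s$ and $\rho=rs$. It has the form $(N_0+N_1\rho)/(D_0+D_1\rho+16\rho^2)$, with coefficients depending only on $m$. For fixed $m$, the sign of $\partial_\rho R$ is that of $Q_m(\rho)=N_1D_0-N_0D_1-32N_0\rho-16N_1\rho^2$, which is strictly decreasing for $\rho\ge0$. So on the feasible slice $m-1\le\rho\le\lfloor m^2/4\rfloor$ one endpoint sign decides monotonicity.

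\begin{itemize}
\item For $2\le m\le10$, $Q_m(\lfloor m^2/4\rfloor)>0$. Hence $R$ increases towards the (near-)balanced point, and a list of nine values, maximised at $m=8$, yields $(4,4)$.
\item For $m\ge11$, $-Q_m(m-1)$ is a polynomial in $u=m-11$ with positive coefficients. Hence the slice maximum sits at $(1,m-1)$, and one further positive-coefficient polynomial in $u$ shows $R_{1,m-1}<78100/77841$.
\end{itemize}

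The same monotonicity shows $R_{r,s}>1$ for all $r+s\ge11$, since each such slice decreases towards its balanced value, which exceeds $1$. This is why a cutoff argument of your type cannot succeed. The paper replaces a two-variable decay estimate by one-variable certificates along exactly the boundary where $R$ stays above $1$.
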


\begin{proof}
Put $m=r+s$ and $\rho=rs$, and define
\begin{gather*}
D=(m+2)(m+3)(m+4),\qquad
K=(m+2)^2(m+3)(m+4),\\
L=D+24,\qquad C=6D+32,\\
N_0=K(6m^3+78m^2+356m+544),\qquad
N_1=K(m^2+7m+16),\\
D_0=4Cm^2+LCm+C^2,\qquad
D_1=L^2+4Lm-8C.
\end{gather*}
Simplifying the three displayed probabilities gives
\begin{equation}
\label{eq:bundle-rho}
R_{r,s}=\frac{N_0+N_1\rho}{D_0+D_1\rho+16\rho^2}.
\end{equation}
For fixed $m$, the derivative with respect to $\rho$ has the sign of
\[
Q_m(\rho)=N_1D_0-N_0D_1-32N_0\rho-16N_1\rho^2,
\]
and $Q_m'(\rho)=-32N_0-32N_1\rho<0$ for $\rho\ge0$. The feasible range is
$m-1\le\rho\le\lfloor m^2/4\rfloor$.

For $2\le m\le10$, direct substitution at the right endpoint gives
\begin{align*}
\bigl(Q_m(\lfloor m^2/4\rfloor)\bigr)_{m=2}^{10}
={}&(623232000,3726475200,15482880000,50485284864,\\
&133754572800,297101597760,541023436800,\\
&751512453120,464792380416),
\end{align*}
so $R$ is increasing throughout the feasible interval. Its balanced endpoint
values, in the same order, are
\[
\frac{44}{45},\ \frac{3745}{3774},\ \frac{840}{841},\
\frac{14385}{14362},\ \frac{109872}{109561},\
\frac{763587}{761105},\ \frac{78100}{77841},\
\frac{385385}{384154},\ \frac{83772}{83521}.
\]
The largest is the $m=8$ value.

For $m\ge11$, put $u=m-11$. At the left endpoint,
\begin{align*}
-Q_m(m-1)={}&930527357760+3158322926304u+1989328507888u^2\\
&+609067397832u^3+112783678672u^4+13814498848u^5\\
&+1164466368u^6+68281008u^7+2747600u^8+72576u^9\\
&+1136u^{10}+8u^{11}>0.
\end{align*}
Thus $R$ is decreasing on the feasible interval. If $\mathrm{Num}$ and
$\mathrm{Den}$ denote the numerator and denominator of
\eqref{eq:bundle-rho} at $\rho=m-1$, then
\begin{align*}
78100\,\mathrm{Den}-77841\,\mathrm{Num}
={}&34642293840+28712778516u+9667374224u^2\\
&+1607247273u^3+144551244u^4+7147790u^5\\
&+181300u^6+1813u^7>0.
\end{align*}
This proves the bound for $m\ge11$ as well. Equality in the full argument
requires $m=8$ and $\rho=16$, hence $r=s=4$. Substitution of $r=s=t$ in
\eqref{eq:bundle-rho} gives the two symmetric formulas, and $t=3$ gives the
values from \cite{LPS}.
\end{proof}

Thus the parallel-bundle route cannot push the ratio above $1.0034$. The simple
graphs below exceed this family maximum. Appendix~\ref{sec:verification}
records the exact certificate checks.

\subsection{Positive correlation in a simple graph}

Parallel edges are not necessary for the failure of pairwise negative
correlation.

\begin{proposition}
\label{prop:simple-counterexample}
Let $G$ be the simple graph on $\{a,b,c,d,z\}$ with
\[
E(G)=\{ab,ac,ad,az,bc,bd,bz,cd\},
\]
and mark $e=az$ and $f=cd$. Under i.i.d.\ atomless edge weights,
\[
p_e=\frac7{12},\qquad p_f=\frac{69}{140},\qquad
p_{ef}=\frac{145}{504},\qquad
\frac{p_{ef}}{p_ep_f}=\frac{1450}{1449}>1.
\]
\end{proposition}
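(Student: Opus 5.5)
Since $\PMST$ depends only on the relative order of the eight weights, all three quantities are rational numbers with denominator dividing $8!=40320$. The fully rigorous fallback is therefore an exhaustive count: run Kruskal on all $8!$ orderings and record how many accept $az$, $cd$, or both. I would present a hand derivation with uniform $[0,1]$ weights, based on Lemma~\ref{lem:kruskal}, and use the count as an independent certificate in Appendix~\ref{sec:verification}. The structure to exploit is that $G$ is $K_4$ on $\{a,b,c,d\}$ together with a vertex $z$ of degree two attached to $a$ and $b$. The marked edge $e=az$ is one half of that series path, and $f=cd$ is the edge of $K_4$ disjoint from $ab$. This is the same four-terminal geometry as Lemma~\ref{lem:fourterminal}, except that the $ab$ position is now a parallel combination of the edge $ab$ and the path $a$--$z$--$b$.

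\textbf{Marginal of $e$.} Condition on $w(az)=u$. The only route from $z$ to $a$ avoiding $az$ uses $bz$ followed by a $b$--$a$ path inside $K_4$. So $e$ is rejected iff $w(bz)<u$ and $a\leftrightarrow b$ in $(K_4)_{<u}$, which gives
\[
p_e=1-\int_0^1 u\,R_{ab}(u)\,du,
\]
where $R_{ab}$ is the two-terminal reliability between adjacent vertices of $K_4$. To compute $R_{ab}$, take $ab$ open with probability $u$; otherwise the four cross edges with $cd$ form a Wheatstone bridge, whose reliability follows by pivoting on $cd$ as in Lemma~\ref{lem:fourterminal}. Integrating should give $p_e=7/12$.

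\textbf{Marginal of $f$.} Condition on $w(cd)=v$. The pair $a,b$ is joined below $v$ by an effective edge that is open with probability $1-(1-v)(1-v^2)$. Pivoting on this effective edge exactly as in \eqref{eq:pA}, non-connection of $c,d$ has probability $(1-v^2)^2$ when it is closed and $2(1-v)^2-(1-v)^4$ when it is open. Integrating over $v$ should give $69/140$.

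\textbf{Joint probability.} This is the main obstacle, because the $z$-path couples the two marked edges. I would split on the order of $u=w(az)$ and $v=w(cd)$, as in the proof of \eqref{eq:pAB}.

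When $v<u$: first require $f$ accepted at time $v$, meaning $c\not\leftrightarrow d$ below $v$. Then, with $c,d$ merged, require $z\not\leftrightarrow a$ below $u$. That fails only if $w(bz)<u$ and $b\leftrightarrow a$ in the graph with $c,d$ contracted. I would condition on $w(bz)$ and $w(ab)$ relative to $v$ and $u$ and write each case as an intersection of independent events at $c$ and at $d$, mimicking the $N_u\subseteq B_u$ bookkeeping in the proof of \eqref{eq:pAB}.

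When $u<v$: accepting $e$ merges $z$ into $a$. Then $f$ is accepted iff $c\not\leftrightarrow d$ below $v$ in the Wheatstone configuration. There the $ab$ position is open iff $w(ab)<v$ or $w(bz)<v$; and if $w(bz)<u$, that already forces $a\not\leftrightarrow b$ in $K_4$ below $u$. The resulting polynomial integrands over $\{u<v\}$ and $\{v<u\}$ are elementary. Summing them should give $p_{ef}=145/504$, and the ratio is then $\frac{145/504}{(7/12)(69/140)}=1450/1449$.

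Because this case analysis is error-prone, I would cross-check all three numerators against the $8!$-ordering enumeration, which serves as the certificate.
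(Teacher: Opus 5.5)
Your proposal is correct. The paper's proof is just the exhaustive count you keep in reserve: Kruskal run over all $8!$ orders accepts $e$, $f$ and both in $23520$, $19872$ and $11600$ orders. The paper cross-checks these counts by summing over relative-order patterns of the marked edges.

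Your main route is genuinely different. You condition on $u=w(az)$ and $v=w(cd)$ and reduce everything to series--parallel and Wheatstone reliabilities; this is a continuous, hand-checkable version of that pattern summation. The case analysis you describe is right, and the integrals you left as ``should give'' do come out:
\begin{itemize}
\item $R_{ab}(u)=u+2u^2-7u^4+7u^5-2u^6$ and $\int_0^1uR_{ab}(u)\,du=5/12$, so $p_e=7/12$.
\item Your $p_f$ integrand integrates to $7/15+11/420=69/140$.
\item For the joint probability, put $\varphi(v)=\PP\bigl(c\not\leftrightarrow d\text{ in }(K_4-cd)_{<v}\bigr)=2(1-v)^2+2(1-v)^3-5(1-v)^4+2(1-v)^5$. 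On $\{v<u\}$ the integrand is $(1-u)\varphi(v)+u(1-u)\bigl[2(1-u)^2(1-v^2)-(1-u)^4\bigr]$, contributing $134/840$.
\item On $\{u<v\}$, splitting by $w(bz)\ge v$, $u\le w(bz)<v$ and $w(bz)<u$, the integrand is $(1-v)\varphi(v)+(v-u)\bigl[2(1-v)^2-(1-v)^4\bigr]+u(1-u)\bigl[2(1-u^2)(1-v)^2-(1-v)^4\bigr]$, contributing $323/2520$.
\end{itemize}
The two contributions total $725/2520=145/504$.

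One slip: on $\{v<u\}$ the $N\subseteq B$ bookkeeping factorises over the edge pairs at $a$ and at $b$, namely $\{ac,ad\}$ and $\{bc,bd\}$, not at $c$ and $d$. The $c/d$ grouping is the right one only on $\{u<v\}$, in the subcase $w(bz)<u$.

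The trade-off is the usual one. The enumeration is mechanical and immune to case errors. Your computation can be checked by a reader without a computer, and it shows explicitly how the $a$--$z$--$b$ path couples the two marked edges.
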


\begin{proof}
All $8!$ strict edge orders are equally likely. Running Kruskal's rule through
those orders accepts $e$ in $23520$ orders, accepts $f$ in $19872$ orders, and
accepts both in $11600$ orders. Dividing by $8!=40320$ gives the three stated
probabilities and hence the ratio. Exact summation by relative-order patterns
gives the same values.
\end{proof}

This witness is minimal by edge count. Indeed, a smallest counterexample may be
assumed bridgeless: a bridge belongs to every spanning tree, and deleting all
bridges separates independent minimum-spanning-tree problems. A connected
bridgeless simple graph with $m$ edges has at most $m$ vertices. An exhaustive
exact census of all such graphs with $m\le7$ finds no violation.

The smallest witness is not the strongest one we know. Let $S$ be a set of hub
vertices with no internal edges, take four further vertices $a,b,c,d$, and join
every vertex of $S$ to all four of $a,b,c,d$. Add the two marked edges $A=ab$
and $B=cd$.

\begin{proposition}
\label{prop:hub-witnesses}
For the preceding simple graph, two hubs give
\[
p_A=p_B=\frac12,\qquad p_{AB}=\frac{1186}{4725},\qquad
\frac{p_{AB}}{p_Ap_B}=\frac{4744}{4725}.
\]
Three hubs give
\[
p_A=p_B=\frac{1123}{2730},\qquad p_{AB}=\frac{71479}{420420},\qquad
\frac{p_{AB}}{p_Ap_B}=\frac{13938405}{13872419}
   =1.004756\ldots .
\]
In particular the second ratio is larger than the maximum
$78100/77841$ in Proposition~\ref{prop:bundle}.

For the finite range $1\le |S|\le9$, exact evaluation shows that the ratio is
maximized at $|S|=3$ and is below $1$ for $|S|=7,8,9$. No assertion is made
for all hub counts.
\end{proposition}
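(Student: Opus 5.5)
The statement is a finite exact computation, so the plan is to reduce each hub count to a small number of rational integrals and evaluate them exactly. I would not enumerate edge orders directly: with $|S|=3$ the graph has $14$ edges, which is too many.

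\emph{Reduction by conditioning.} Condition on the weights $x=w(A)$ and $y=w(B)$. Every other edge joins a hub to one of $a,b,c,d$, so the environment $H$ is a union of $|S|$ independent stars, each hub with four uniform spokes. By Lemma~\ref{lem:kruskal}, $A$ is accepted iff $a\not\leftrightarrow b$ in $G_{<x}$. For a given threshold each hub is in one of the following states: isolated; attached to a single terminal; or attached to a subset of $\{a,b,c,d\}$ of size at least two. This state depends only on which of its spokes have weight below $x$, and where relevant below $y$. The hubs are independent given $(x,y)$.

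Connectivity among $a,b,c,d$ is then determined by the family of subsets realised by the hubs, together with the edge $B$ if $y<x$. Hence
\[
\PP(A\in T\mid x,y)=\sum_{\mathcal P}\PP(\text{hub pattern }\mathcal P)\,\mathbf 1\{a\not\leftrightarrow b\},
\]
a polynomial in $x$ and $y$. Integrating over $0<x<1$ gives $p_A$, and $p_B=p_A$ by the symmetry exchanging $\{a,b\}$ with $\{c,d\}$.

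\emph{The joint probability.} For $p_{AB}$ I would split into $x<y$ and $y<x$, which by symmetry give equal contributions, mirroring the proof of Lemma~\ref{lem:fourterminal}. Take $x<y$. Each hub's spokes are classified by weight into the three ranges below $x$, in $[x,y)$, and above $y$. $A$ is accepted iff no hub has both $a$ and $b$ below $x$ and no chain through $c$ or $d$ joins $a$ to $b$ below $x$. Given that $A$ is accepted and merged before $y$, $B$ is accepted iff $c\not\leftrightarrow d$ in $G_{<y}\cup\{A\}$. Each hub contributes one of finitely many patterns with polynomial probabilities, and the joint event is a Boolean function of the pattern multiset. The integral $\iint_{x<y}$ of the resulting polynomial is a rational number.

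\emph{Evaluation and the main obstacle.} I would carry this out by computer algebra: enumerate the hub pattern multisets (their number grows polynomially in $|S|$), test connectivity of the small quotient graph on $\{a,b,c,d\}$, sum the polynomial weights, and integrate exactly. This should reproduce the values $1/2$ and $1186/4725$ for $|S|=2$ and $1123/2730$ and $71479/420420$ for $|S|=3$, and the comparison with $78100/77841$ is then rational arithmetic. As a cross-check at $|S|=2$ I would also brute-force all $10!$ edge orders. The same code for $|S|\le9$ gives the ratios, and shows the maximum at $|S|=3$ and values below $1$ at $|S|=7,8,9$.

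The main obstacle is organising the joint case cleanly. The hub state in the two-threshold picture has $3^4$ spoke classes. The right coarse state is needed so that the connectivity tests are correct, especially that $A$ merges $a$ and $b$ before $y$. Getting the pattern bookkeeping right, and certifying it by the brute-force check at $|S|=2$, is where I expect the effort to go.
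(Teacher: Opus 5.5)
Your proposal is correct and is essentially the paper's own argument: the paper likewise sums exactly over the relative positions of the two marked edges and the spokes, grouping the patterns by the two induced partitions of $\{a,b,c,d\}$ (your two-threshold hub patterns), and it cross-checks the two-hub case against a full enumeration of all $10!$ orders. Your conditioning on $(x,y)$ followed by integration is the continuous form of that relative-order summation, and tracking the pair of induced partitions hub by hub is the efficient way to handle the bookkeeping you flag for the range $|S|\le 9$.
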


\begin{proof}
For two hubs, full enumeration of the $10!$ strict edge orders gives marginal
counts $1814400$ and joint count $910848$. For three hubs, exact summation over
the relative positions of the two marked edges and the other twelve edges gives
marginal count $35861253120$ and joint count $14821885440$ out of $14!$ orders.
Dividing gives the stated probabilities. The relative-position calculation is
also applied to the two-hub graph and agrees with its full enumeration.
For $1\le |S|\le9$, grouping the relative-order patterns by the two induced
partitions of $\{a,b,c,d\}$ gives the finite-range comparison exactly.
\end{proof}

\subsection{Non-identical laws: proof of Theorem C}
\label{sec:nonidentical}

The second specialisation replaces the parallel bundles, which are a device for
producing a $\min$ of uniforms, by their opposite.

\begin{proof}[Proof of Theorem C]
Take $F(x)=G(x)=x^t$ and $H(x)=x$ on $[0,1]$ in Lemma~\ref{lem:fourterminal}.
All integrands are then polynomials and the integrals are elementary; carrying
them out gives
\begin{equation}
\label{eq:AtJt}
p_{\mathsf A}=p_{\mathsf B}=A_t:=\frac{3(5t^2+12t+8)}{(t+1)(t+2)(t+4)(2t+3)},
\qquad
p_{\mathsf{AB}}=J_t:=\frac{2(5t+6)}{(t+1)(t+2)^2(2t+3)} .
\end{equation}
Therefore
\[
R(t)=\frac{J_t}{A_t^2}
=\frac{2(5t+6)(t+1)(t+4)^2(2t+3)}{9\,(5t^2+12t+8)^2},
\]
a rational function whose numerator has degree $5$ and whose denominator has
degree $4$, with leading coefficients $20$ and $225$. Hence $R(t)\to\infty$ and
$R(t)/t\to20/225=4/45$; expanding one order further gives the constant $46/75$.

For monotonicity write $R(t)=2N(t)/(9D(t))$, omitting the displayed constant
factors from the numerator and denominator. The sign of $R(t+1)-R(t)$ is the
sign of $Q(t)=N(t+1)D(t)-N(t)D(t+1)$. With $u=t-1$, exact expansion gives
\begin{align*}
Q(u+1)={}&124000+669300u+1289264u^2+1262269u^3+712831u^4\\
&+242311u^5+48935u^6+5400u^7+250u^8,
\end{align*}
which is positive for $u\ge0$. Direct substitution gives $R(83)<8<R(84)$,
so $84$ is the first crossing.
\end{proof}

The exact values $R(1)=44/45$, $R(2)=168/169$, $R(3)=8232/7921$,
$R(4)=2860/2601$ are immediate from the displayed formula, and
$R(3000)=267.28\ldots$ Note
that $t=1$ is plain $K_4$ with all six weights uniform, so $R(1)=44/45$ is the
disjoint-pair ratio $p_2/p_0^2$ at $n=4$ and agrees with Table~\ref{tab:exact}.

\begin{remark}[why this does not contradict Theorem A]
\label{rem:attenuation}
The law $\max(U_1,\dots,U_t)$ arises naturally inside an i.i.d.\ model: if an edge
is subdivided into a path of $t$ edges with private internal vertices and i.i.d.\
uniform weights, then, after subtracting the constant total path weight, the
minimum spanning tree of the quotient is Kruskal on the base graph with the path
\emph{maxima} as effective weights. So the family of Theorem C is realisable at
the level of \emph{macro-edges}. It is not realisable at the level of a single
fixed edge, and the loss is exactly quantifiable. Let $X,Y$ be the indicators
that the two macro-edges lie in the quotient tree, with probabilities
$\PP(X)=p$ and $\PP(Y)=q$, and let $I_1,I_2$ be one fixed constituent edge in
each path, both paths of length $t$. A constituent edge is omitted exactly when
its macro-edge is omitted \emph{and} it carries the maximum of its own path; the
identity of that maximum is uniform along the path and independent of its value,
so
\[
\PP(I_1)=1-\frac{1-p}t=:\alpha,\qquad
\PP(I_2)=1-\frac{1-q}t=:\beta,\qquad
\operatorname{Cov}(I_1,I_2)=\frac{\operatorname{Cov}(X,Y)}{t^2},
\]
for all $p,q$. If $p,q>0$ and the macro-edge relative excess is nonzero, this
identity may be divided by that excess to give
\[
\frac{\PP(I_1,I_2)/(\PP(I_1)\PP(I_2))-1}{\PP(X,Y)/(pq)-1}
=\frac{pq}{t^2\alpha\beta}\ \le\ \frac1{t^2},
\]
the last step because $\alpha\ge p$ and $\beta\ge q$.
Applied to \eqref{eq:AtJt} this leaves the fixed-edge ratio
$1+5t^{-5}+O(t^{-6})$, so the amplification is destroyed. Theorem C is a theorem
about the independent non-identical model, and Theorem A is a theorem about the
i.i.d.\ model; the distinction is essential.
\end{remark}

\section{The complete graph}
\label{sec:kn}

\subsection{The accepted-merger coalescent}

Give every edge of $K_n$ an independent rate-one exponential clock and run
Kruskal's algorithm in the order in which the clocks ring. This does not change
$\PMST$, since the tree depends only on the weight order, and it has the
advantage that the weights are now the ring times, so the total weight of the
tree is $L_n$.

Discard the rejected clocks and record only the \emph{accepted mergers}. There are
exactly $n-1$ of them, and they merge the vertex set from $n$ singletons to one
block.

\begin{lemma}
\label{lem:coalescent}
At time zero, or just after any accepted merger other than the final one,
condition on the whole past and let $A_1,\dots,A_k$ be the current components,
of sizes $a_1,\dots,a_k$; thus $k\ge2$. Then the next accepted merger joins
$A_i$ and $A_j$ with
probability
\begin{equation}
\label{eq:mcrate}
\frac{a_ia_j}{Z},\qquad Z=\sum_{i<j}a_ia_j=\frac{n^2-\sum_i a_i^2}{2},
\end{equation}
and, conditionally on that pair, the two endpoints of the accepted edge are
independent and uniform in $A_i$ and $A_j$ respectively, and independent of the
past.
\end{lemma}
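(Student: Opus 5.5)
The plan is to use the memorylessness of the exponential clocks together with the symmetry of $K_n$. Fix a stopping point: time zero, or the ring time $\tau$ of an accepted merger other than the last. Condition on the whole history $\mathcal G_\tau$ up to $\tau$, meaning which clocks have rung, in what order and at what times. At this moment every edge whose clock has not yet rung has a residual clock. By the memoryless property and the strong Markov property of independent Poisson clocks, these residual clocks are i.i.d.\ $\mathrm{Exp}(1)$ and independent of $\mathcal G_\tau$. Formally I would verify this by conditioning on the ordered list of ring times and using that the minimum of the remaining clocks, minus the current time, is again a family of independent exponentials.

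The next step identifies which unrung edges matter. Each edge joining two distinct current components $A_i\ne A_j$ has not yet rung: had it rung earlier, Kruskal would have either accepted it or found its endpoints already connected. In either case its endpoints would now lie in one component, since components only grow. So every cross edge is unrung. Any unrung edge inside a component will be rejected whenever it rings, because its endpoints are already connected. Hence the next accepted merger is carried by the first cross edge to ring. There are exactly $\sum_{i<j}a_ia_j=Z$ cross edges, all with i.i.d.\ $\mathrm{Exp}(1)$ residual clocks, independent of the past. The first to ring is therefore uniform among these $Z$ edges.

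It remains to count. The number of cross edges between $A_i$ and $A_j$ is $a_ia_j$, which gives the probability $a_ia_j/Z$. Conditional on the pair $\{A_i,A_j\}$, the accepted edge is uniform among the $a_ia_j$ edges $\{u,v\}$ with $u\in A_i$ and $v\in A_j$. This product structure is exactly what makes the two endpoints independent and uniform in $A_i$ and $A_j$. Everything is computed from a conditional law that does not depend on $\mathcal G_\tau$ beyond the current partition, which gives the independence from the past. The identity $Z=(n^2-\sum a_i^2)/2$ follows from expanding $(\sum a_i)^2=n^2$.

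The only delicate point is the first one: making the restart at a random time rigorous. I would handle it by discretizing over the finitely many possible orderings. Conditional on the event that a specific set $S$ of edges has rung in a specific order at times $t_1<\dots<t_m=\tau$, and that none of the remaining clocks has rung by $\tau$, the remaining clocks are independent and each is conditioned to exceed $\tau$. By memorylessness, their overshoots are i.i.d.\ $\mathrm{Exp}(1)$. The condition $k\ge2$ ensures $Z>0$, so the next accepted merger exists.
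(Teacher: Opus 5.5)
Your proposal is correct and follows the paper's proof. Both arguments note that every cross edge is still unrung and that memorylessness makes the residual clocks i.i.d.\ $\mathrm{Exp}(1)$ given the past, so the next accepted edge is uniform among the $Z$ cross edges; counting then gives the pair law $a_ia_j/Z$ and the product-uniform endpoints. Your version is slightly more careful than the paper's short argument: you cover the case where an earlier-rung edge was rejected rather than accepted, and you make the restart at the random time $\tau$ rigorous by conditioning on the set of rung clocks, their order and their ring times.
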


\begin{proof}
An edge of $K_n$ whose endpoints lie in two distinct current components has not
yet rung: had it rung earlier, Kruskal would have accepted it at that time and
merged those components. By memorylessness of the exponential the residual
clocks of all such edges are i.i.d.\ $\mathrm{Exp}(1)$ given the past, so the
next accepted edge is uniform among the $Z$ cross edges. Uniformity among the
$a_ia_j$ edges between $A_i$ and $A_j$ is exactly the assertion that, given the
pair, the endpoints are independent and uniform; and this is independent of the
past, which involves only clocks that have already rung.
\end{proof}

Lemma~\ref{lem:coalescent} identifies the sequence of component sizes with the
discrete skeleton of the multiplicative coalescent, cf.\ \cite{Aldous}. This is
the only place where the complete graph enters. Direct Kruskal enumerations on
small complete graphs agree with this transition model.

The two functionals below are the bookkeeping devices for the whole section.

\begin{definition}
\label{def:HJ}
A \emph{merger history} is the sequence of pairs of component sizes merged, in
order. For a merger history on $n$ leaves put
\[
H=\sum_{\text{mergers}}\Bigl(\frac1a+\frac1b\Bigr),
\qquad
J=\sum_{\text{mergers}}\frac1{a+b},
\]
where $a,b$ are the sizes of the two components merged (the \emph{children}) and
$a+b$ is the size of the component created (the \emph{parent}).
\end{definition}

\begin{lemma}[pathwise accounting]
\label{lem:HJ}
For every merger history on $n$ leaves,
\[
H=n-\frac1n+J .
\]
\end{lemma}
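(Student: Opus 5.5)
The plan is to view the merger history as a binary tree, with the $n$ singletons as leaves and one internal node for each of the $n-1$ mergers, and then to regroup the sum defining $H$ by components rather than by mergers. Each component $C$ that ever appears, whether a singleton at time zero or the parent created by some merger, contributes $1/|C|$ to $H$ exactly once for every merger in which it is a child. The paper's guide states the key fact: every component other than the final block $V$ is a child of exactly one merger. It is a child of at least one merger because the process ends with a single block, so any other component must eventually be absorbed. It is a child of at most one merger because once it has been merged it is no longer a current component and never reappears.

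This gives
\[
H=\sum_{C\neq V}\frac1{|C|},
\]
where the sum runs over all components ever present except the final one. These components split into two kinds: the $n$ initial singletons, each contributing $1$, and the parents created by the mergers. The parent created by the last merger is $V$ itself, which is excluded. The sum over all parents is $J$, and it includes $1/|V|=1/n$ from the last merger. Hence
\[
H=n+\Bigl(J-\frac1n\Bigr),
\]
which is the claimed identity.

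The only delicate step is making the ``child of exactly one merger'' bijection precise, in particular ruling out double counting when two components of equal size are merged. I would handle this by treating components as vertex subsets, not as sizes. Each subset is created once, either at time zero or by one merger, and destroyed once, by one merger, except $V$, which is never destroyed. So each non-final component contributes its reciprocal size exactly once. Nothing probabilistic is involved, so the argument is purely combinatorial and holds for every merger history.
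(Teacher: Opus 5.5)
Your proposal is correct and is essentially the paper's proof: you regroup $H$ as $\sum_{C\neq V}1/|C|$ using the fact that every non-final component is the child of exactly one merger, then split it into the $n$ singletons and the created components, removing the root's $1/n$ from $J$. Tracking components as vertex subsets (or as distinct blocks in the evolving partition) rather than as sizes is a good way to make the ``exactly one merger'' count precise, and it matches how the paper's argument reads.
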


\begin{proof}
Every component other than the final one of size $n$ is a child of exactly one
merger, so $H=\sum_{C\ne\text{root}}1/|C|$, the sum being over all components
ever present. The components ever present are the $n$ singletons together with
the $n-1$ components created by mergers, and $J$ sums $1/|C|$ over the created
ones. Hence
\[
H=\Bigl(n+J\Bigr)-\frac1n,
\]
the term $n$ from the singletons and the correction $-1/n$ from removing the
root, which is created but is nobody's child.
\end{proof}

\subsection{The degree functional}

Let the accepted forest carry the degree function $\dg$ and set
\[
\Phi=\sum_{x\in V}\dg(x)\bigl(\dg(x)-1\bigr),
\]
so that in the final tree $\Phi$ counts the ordered pairs of distinct tree edges
sharing a vertex.

\begin{lemma}
\label{lem:drift}
Conditionally on the entire merger history,
\begin{equation}
\label{eq:PhiH}
\E\bigl[\Phi\mid \text{merger history}\bigr]=8(n-1)-4H .
\end{equation}
\end{lemma}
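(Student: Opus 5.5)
We prove \eqref{eq:PhiH} by computing the conditional expected increment of $\Phi$ at each accepted merger and summing over the $n-1$ mergers. The tool is Lemma~\ref{lem:coalescent}: given the merger history, the endpoints of each accepted edge are uniform in their two blocks. We also need these endpoints to be mutually independent across mergers. Conditioning on the whole history (the sequence of block pairs) is legitimate: Lemma~\ref{lem:coalescent} says that, given the past and the chosen pair, the endpoints are uniform and independent of the past. The past includes all earlier endpoints, and the choice of pair depends only on sizes, not on endpoints. So the endpoint choices form independent uniform selections given the history.

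Fix a merger of blocks $A,B$ of sizes $a,b$, joined by an edge $xy$ with $x$ uniform in $A$ and $y$ uniform in $B$. Adding this edge raises $\dg(x)$ and $\dg(y)$ by one each. Since $d(d-1)$ increases by $2d$ when $d\mapsto d+1$, the increment of $\Phi$ is $2\dg(x)+2\dg(y)$, with degrees taken just before the merger. Just before the merger, $A$ is a tree on $a$ vertices carrying exactly $a-1$ accepted edges, so $\sum_{v\in A}\dg(v)=2(a-1)$ (degrees counted inside the current forest, which is all of $\dg(v)$ for $v\in A$). With $x$ uniform in $A$, independently of the internal structure of $A$ given the history,
\[
\E[\dg(x)\mid\text{history, past}]=\frac{2(a-1)}{a}=2-\frac2a .
\]
Likewise $\E[\dg(y)]=2-2/b$. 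The expected increment of $\Phi$ at this merger is therefore $2(2-2/a)+2(2-2/b)=8-4/a-4/b$.

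Summing over the $n-1$ mergers by the tower property, with $\Phi=0$ initially, gives
\[
\E[\Phi\mid\text{history}]=8(n-1)-4\sum(1/a+1/b)=8(n-1)-4H
\]
by Definition~\ref{def:HJ}. The main point needing care is the conditional independence: the average of $\dg(x)$ over $A$ uses only the edge count of $A$, which is deterministic given the history. So even though the internal tree shape of $A$ is random, uniformity of $x$ given everything prior suffices, and no further structure is required.
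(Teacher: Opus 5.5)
Your proposal is correct and is essentially the paper's proof: you use the increment $2\dg(x)+2\dg(y)$, the degree sum $2(a-1)$ in a tree block together with the uniform endpoint from Lemma~\ref{lem:coalescent}, and summation over the $n-1$ mergers, using that the merger law depends only on sizes so the endpoint choices carry no information about the history. The only cosmetic difference is that you condition on the sequence of block pairs, whereas the paper's merger history records only size pairs; since $H$ is a function of the sizes, the tower property carries your finer conditioning over to the paper's.
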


\begin{proof}
If a merger joins components of sizes $a,b$ through endpoints $u,v$, then
$\Phi$ increases by
$\bigl[(\dg(u)+1)\dg(u)-\dg(u)(\dg(u)-1)\bigr]+\bigl[\text{same at }v\bigr]
=2\dg(u)+2\dg(v)$. Each component is a tree, so the degrees inside a component of
size $a$ sum to $2(a-1)$; by Lemma~\ref{lem:coalescent} the endpoint $u$ is
uniform in its component and independent of everything else, whence
$\E[\dg(u)\mid\cdot]=2(a-1)/a$. Therefore
\[
\E[\Delta\Phi\mid \cdot]=\frac{4(a-1)}a+\frac{4(b-1)}b=8-\frac4a-\frac4b .
\]
The endpoint choices are independent of the sequence of component sizes, since
the evolution \eqref{eq:mcrate} depends on sizes only; so we may condition on the
whole merger history and sum over the $n-1$ mergers, obtaining
$8(n-1)-4H$ by Definition~\ref{def:HJ}.
\end{proof}

Identity \eqref{eq:PhiH} reduces both pair probabilities on $K_n$ to the single
functional $H$, and by Lemma~\ref{lem:HJ} to $J$. Adjacent negative correlation
needs $J$ bounded below, disjoint negative correlation needs it bounded above,
and the two directions require quite different handles on the same quantity.

\subsection{Adjacent pairs: proof of Theorem B1}

Write $H_n=\sum_{k=1}^n1/k$ for the $n$th harmonic number.

\begin{lemma}[chronological reciprocal bound]
\label{lem:harmonic}
Every merger history on $n$ leaves satisfies
\begin{equation}
\label{eq:Jlower}
J\ \ge\ H_n-1.
\end{equation}
Equality holds exactly for a comb history: after the first merger, each merger
attaches one singleton to the component created at the preceding step.
\end{lemma}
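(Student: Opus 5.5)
Index the mergers chronologically by $j=1,\dots,n-1$ and write $c_j$ for the size of the component created by the $j$th merger, so that $J=\sum_{j=1}^{n-1}1/c_j$. The proof rests on a pathwise size bound:
\[
c_j\le j+1\qquad(1\le j\le n-1).
\]
Given this, $J\ge\sum_{j=1}^{n-1}1/(j+1)=H_n-1$, which is \eqref{eq:Jlower}.

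To prove the size bound, note that the component created at step $j$ is built out of singletons using only mergers from steps $1,\dots,j$. A component of size $c$ needs exactly $c-1$ mergers to assemble, and all of them must be among the first $j$ (including step $j$ itself). Hence $c_j-1\le j$. The cleanest way to say this is through the merger tree: the subtree rooted at the $j$th created component has $c_j-1$ internal nodes, each of which is a merger with index at most $j$.

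For the equality case, equality in the sum forces $c_j=j+1$ for every $j$, because each term satisfies $1/c_j\ge 1/(j+1)$. Then $c_j=j+1$ means that all of the first $j$ mergers lie inside the subtree of the $j$th component. I will argue by induction that each merger attaches a singleton to the previously created component. The base case is $c_1=2$, which holds automatically. For the inductive step, suppose $c_{j-1}=j$ and that the first $j-1$ mergers built that single component. Then at step $j$ the only non-singleton is this component. To get $c_j=j+1$, step $j$ must merge it with a singleton, since joining two singletons would give size $2<j+1$ once $j\ge2$. This is exactly a comb history. Conversely, a comb history gives $c_j=j+1$ for all $j$, so equality holds.

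The only point needing care is to state cleanly that mergers inside the created component are exactly the $c_j-1$ internal nodes of its subtree, all of which occur by time $j$. Beyond that I expect no real obstacle; the argument is purely combinatorial and uses neither the merger law nor Lemma~\ref{lem:coalescent}.
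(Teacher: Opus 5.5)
Your proposal is correct and follows essentially the same route as the paper. Both argue the pathwise bound $c_j\le j+1$ because the $j$th created component needs $c_j-1$ mergers, all among the first $j$, then sum to get $H_n-1$. Both then show that termwise equality $c_j=j+1$ forces, by induction, each merger to attach a singleton to the previously created component, and check the converse for comb histories.
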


\begin{proof}
Order the mergers chronologically. The component created by the $j$th merger
has size at most $j+1$, since a tree component on $s$ vertices requires $s-1$
accepted edges among those vertices. Therefore
\[
J\ge\sum_{j=1}^{n-1}\frac1{j+1}=H_n-1.
\]
Equality requires the successive parent sizes to be $2,3,\ldots,n$. After the
first merger, induction then forces the component just created to merge with a
singleton at every step. Conversely such a comb history has precisely those
parent sizes and attains equality.
\end{proof}

\begin{proof}[Proof of Theorem B1]
Lemmas~\ref{lem:HJ} and \ref{lem:harmonic} give
$H\ge n-\frac1n+H_n-1$ pathwise, and hence Lemma~\ref{lem:drift} gives
\begin{equation}
\label{eq:Phiupper}
\E[\Phi]\ \le\ 4\left(n-1-H_n+\frac1n\right).
\end{equation}
By vertex exchangeability, if $D$ is the degree of a fixed vertex then
$\E[D(D-1)]=\E[\Phi]/n$. Finally, $K_n$ has exactly $n(n-1)(n-2)$
ordered pairs of distinct edges sharing a vertex (choose the shared vertex, then
an ordered pair of distinct other endpoints), and each contributes $p_1$ to
$\E[\Phi]$. Consequently
\[
\frac{p_1}{p_0^2}
\le \frac{n(n-1-H_n)+1}{(n-1)(n-2)}<1.
\]
The last inequality is equivalent to $H_n>2-1/n$, which holds at $n=3$ and
then by induction. Thus $p_1<p_0^2$. \qedhere
\end{proof}

The chronological bound is sharp over merger histories, although not generally
in expectation. At $n=3$ it gives the exact ratio $p_1/p_0^2=3/4$.

\subsection{Disjoint pairs: proof of Theorem B2}

For the other direction we need $J$ bounded \emph{above} in expectation, and here
the trivial estimate is useless: parent sizes can be as small as $2$. The upper
bound is where the multiplicative merger law \eqref{eq:mcrate} is used, and it
comes from a convexity inequality applied one step at a time.

\begin{lemma}[one-step partition inequality]
\label{lem:partition}
Let $k\ge2$ and let $a_1,\dots,a_k$ be positive integers with $\sum_i a_i=n$.
Then
\begin{equation}
\label{eq:partition}
\sum_{i<j}\frac{a_ia_j}{a_i+a_j}\ \le\ \frac{k}{2n}\sum_{i<j}a_ia_j .
\end{equation}
Equality holds if and only if $k=2$, or $a_1=\dots=a_k$.
\end{lemma}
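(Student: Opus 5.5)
The strategy is to put both sides of \eqref{eq:partition} over a common structure and show that their difference is a Chebyshev-type rearrangement sum of nonnegative terms.

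\textbf{Reduction.} Write $s_{ij}=a_i+a_j$ and $w_{ij}=a_ia_j/s_{ij}$. Multiplying \eqref{eq:partition} by $2n$ and moving everything to one side, the claim becomes
\[
\Delta:=\sum_{i<j} w_{ij}\,\bigl(k\,s_{ij}-2n\bigr)\ \ge\ 0 ,
\]
because $w_{ij}s_{ij}=a_ia_j$. The key observation is that $k\,s_{ij}-2n$ can be written as a sum over an auxiliary index $l$, using $n=\sum_l a_l$:
\[
k\,s_{ij}-2n=\sum_{l=1}^k\bigl[(a_i-a_l)+(a_j-a_l)\bigr].
\]

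\textbf{Symmetrisation.} I pass to ordered pairs, using $\sum_{i<j}=\tfrac12\sum_{i\ne j}$ and the symmetry $w_{ij}=w_{ji}$. Since the bracket is symmetric in $i,j$, this gives
\[
\Delta=\sum_{i\ne j}w_{ij}\sum_l(a_i-a_l)=\sum_{i,l}(a_i-a_l)\,W_i,\qquad W_i:=\sum_{j\ne i}w_{ij}.
\]
Swapping the names $i\leftrightarrow l$ and averaging the two expressions yields
\[
\Delta=\tfrac12\sum_{i,l}(a_i-a_l)(W_i-W_l).
\]
It therefore suffices to show that $W_i-W_l$ has the same sign as $a_i-a_l$; that is, $W$ should be similarly ordered to $a$.

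\textbf{Monotonicity of $W$ and equality cases.} In the difference $W_i-W_l$, the terms $w_{il}$ and $w_{li}$ cancel, so
\[
W_i-W_l=\sum_{j\ne i,l}\bigl(w_{ij}-w_{lj}\bigr).
\]
The map $x\mapsto xa_j/(x+a_j)$ is strictly increasing for $a_j>0$, so each summand has the sign of $a_i-a_l$. Hence every term of $\Delta$ is nonnegative, which proves \eqref{eq:partition}.

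For equality:
\begin{itemize}
\item If $k=2$, the sum over $j\ne i,l$ is empty, $\Delta=0$, and indeed both sides of \eqref{eq:partition} equal $a_1a_2/n$.
\item If all $a_i$ are equal, every term vanishes, so $\Delta=0$.
\item If $k\ge3$ and some $a_i>a_l$, the sum over $j\ne i,l$ is nonempty and each of its summands is strictly positive. Then $(a_i-a_l)(W_i-W_l)>0$, so the inequality is strict.
\end{itemize}

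\textbf{Main obstacle.} I expect the only real difficulty to be finding the decomposition of $ks_{ij}-2n$ that makes the Chebyshev structure visible. Cruder approaches fail: the bound $1/(x+y)\le(1/x+1/y)/4$ loses too much when the block sizes are unequal, and Jensen's inequality applied to the weighted average of $1/s_{ij}$ goes in the wrong direction. Once the rewrite is in place, the remaining bookkeeping is the cancellation of $w_{il}$ against $w_{li}$, which I will check carefully when passing between ordered and unordered pairs.
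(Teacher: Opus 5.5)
Your proof is correct, but it takes a different route from the paper's.

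The paper argues summand by summand. It writes $\frac{ab}{a+b}=\frac{a+b}4-\frac{(a-b)^2}{4(a+b)}$ and weakens the subtracted term using $a_i+a_j\le n$. It then evaluates the resulting power sums exactly: $\sum_{i<j}(a_i+a_j)=(k-1)n$ and $\sum_{i<j}(a_i-a_j)^2=kS_2-n^2$, with $S_2=\sum_ia_i^2$. These land precisely on $\frac{k}{2n}\sum_{i<j}a_ia_j$. Its slack is therefore $\frac14\sum_{i<j}(a_i-a_j)^2\bigl(\frac1{a_i+a_j}-\frac1n\bigr)$. Equality is read off pair by pair: it needs $a_i=a_j$ or $a_i+a_j=n$, and the latter is impossible once $k\ge3$.

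You instead recast the inequality as saying that the $w$-weighted mean of $s_{ij}$ dominates its plain mean $2n/k$. You then exhibit the slack as the Chebyshev sum $\frac12\sum_{i,l}(a_i-a_l)(W_i-W_l)$. What each approach buys:
\begin{itemize}
\item Your argument uses only that $w$ is symmetric and strictly increasing in each argument. So it actually proves $\sum_{i<j}\phi(a_i,a_j)\bigl(k(a_i+a_j)-2n\bigr)\ge0$ for every such $\phi$.
\item It also shows why the multiplicative merger law matters. After dividing by $a_i+a_j$, the multiplicative weight $a_ia_j$ becomes your increasing $w_{ij}$. The uniform pair choice of Remark~\ref{rem:notpathwise} instead becomes the decreasing weight $1/(a_i+a_j)$, and there your Chebyshev sum has the opposite sign.
\item The paper's identity is shorter and purely computational. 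It is, however, tied to the harmonic-mean form of the summand and to the constraint $a_i+a_j\le n$.
\end{itemize}
Your equality analysis is equally sharp: one pair $a_i\ne a_l$ together with a third index already forces strict inequality.
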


\begin{proof}
Write $S_2=\sum_ia_i^2$ and $Z=\sum_{i<j}a_ia_j=(n^2-S_2)/2$. From the identity
\[
\frac{ab}{a+b}=\frac{a+b}4-\frac{(a-b)^2}{4(a+b)}
\]
and from $a_i+a_j\le n$ we get
\[
\sum_{i<j}\frac{a_ia_j}{a_i+a_j}
\ \le\ \frac14\sum_{i<j}(a_i+a_j)-\frac1{4n}\sum_{i<j}(a_i-a_j)^2 .
\]
Now $\sum_{i<j}(a_i+a_j)=(k-1)n$ and
$\sum_{i<j}(a_i-a_j)^2=(k-1)S_2-2Z=kS_2-n^2$, so the right-hand side equals
\[
\frac{(k-1)n}{4}-\frac{kS_2-n^2}{4n}=\frac{k(n^2-S_2)}{4n}=\frac{kZ}{2n},
\]
which is \eqref{eq:partition}.

For the equality case, the only inequality used was
$(a_i-a_j)^2/(a_i+a_j)\ge(a_i-a_j)^2/n$, which is an equality for the pair
$\{i,j\}$ exactly when $a_i=a_j$ or $a_i+a_j=n$. If $k=2$ then $a_1+a_2=n$ and
equality holds. If $k\ge3$ and two parts differ, say $a_i\ne a_j$, then equality
forces $a_i+a_j=n$, which is impossible since the remaining $k-2\ge1$ parts are
positive. Hence for $k\ge3$ equality holds exactly when all parts are equal, and
in that case both sides equal $\binom k2 n/(2k)$.
\end{proof}

Since the left side of \eqref{eq:partition} is, up to the normalisation $Z$,
exactly the conditional expectation of the increment of $J$, summing over the
states visited turns it into the bound we need.

\begin{lemma}[reciprocal budget]
\label{lem:budget}
For the accepted-merger coalescent on $K_n$,
\begin{equation}
\label{eq:EJ}
\E[J]\ \le\ \frac{(n-1)(n+2)}{4n},
\qquad\text{hence}\qquad
\E[H]\ \le\ \frac{(n-1)(5n+6)}{4n},
\end{equation}
with strict inequality in both for $n\ge4$, and equality for $n=2,3$.
\end{lemma}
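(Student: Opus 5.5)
Condition on the state of the coalescent just before each accepted merger. If the current blocks have sizes $a_1,\dots,a_k$ with $k\ge2$, Lemma~\ref{lem:coalescent} makes the next increment of $J$ equal to $1/(a_i+a_j)$ with probability $a_ia_j/Z$. Hence
\[
\E[\Delta J\mid\text{past}]=\frac1Z\sum_{i<j}\frac{a_ia_j}{a_i+a_j}\ \le\ \frac{k}{2n}
\]
by Lemma~\ref{lem:partition}. The number of blocks decreases deterministically by one at each merger, so the $n-1$ states visited have $k=n,n-1,\dots,2$. Summing the conditional expectations by the tower property gives
\[
\E[J]\le\frac1{2n}\sum_{k=2}^{n}k=\frac{1}{2n}\Bigl(\frac{n(n+1)}2-1\Bigr)=\frac{(n-1)(n+2)}{4n}.
\]
Lemma~\ref{lem:HJ} then gives $\E[H]=n-\frac1n+\E[J]$, and
\[
n-\frac1n+\frac{(n-1)(n+2)}{4n}=\frac{(n-1)\bigl(4(n+1)+n+2\bigr)}{4n}=\frac{(n-1)(5n+6)}{4n},
\]
which is the second bound.

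For the equality cases, Lemma~\ref{lem:partition} gives equality in a step exactly when $k=2$ or all current blocks have equal size.
\begin{itemize}
\item For $n=2$ there is a single state with $k=2$.
\item For $n=3$ the states are $(1,1,1)$, which has equal parts, and then a $k=2$ state. Every step is an equality, so equality holds in both bounds.
\end{itemize}

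For strictness when $n\ge4$, it suffices to exhibit one state with $k\ge3$ and unequal parts that is visited with positive probability. Then the conditional inequality is strict there, and taking expectations gives strict inequality overall. After the first merger the state is $(2,1,\dots,1)$ with $k=n-1\ge3$, which is visited with probability one and has unequal parts. So the second summand is strictly smaller than $(n-1)/(2n)$ almost surely, and both bounds are strict. The $H$ bound inherits strictness from the $J$ bound, because Lemma~\ref{lem:HJ} holds pathwise.

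The only delicate point is justifying the summation. The number of steps is deterministic ($n-1$) and all quantities are bounded, so $\E[J]=\sum_{m}\E\bigl[\E[\Delta J_m\mid\mathcal G_{m-1}]\bigr]$, where $\mathcal G_{m-1}$ is the history up to merger $m-1$, with no integrability issues. The step needing care is that the number of blocks before the $m$th merger is exactly $n-m+1$, which holds because each accepted edge merges two distinct components.
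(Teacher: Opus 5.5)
Your proposal is correct and follows the paper's proof essentially step for step. It applies Lemma~\ref{lem:partition} to each of the deterministic sequence of states with $k=n,\dots,2$ blocks and sums by the tower property. It then transfers the bound to $H$ through the pathwise identity of Lemma~\ref{lem:HJ}, and it gets strictness for $n\ge4$ from the almost surely visited state $(2,1,\dots,1)$.
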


\begin{proof}
By Lemma~\ref{lem:coalescent}, from a state with component sizes $a_1,\dots,a_k$
the increment of $J$ has conditional expectation
\[
\E[\Delta J\mid \text{state}]=\frac1Z\sum_{i<j}\frac{a_ia_j}{a_i+a_j}
\ \le\ \frac{k}{2n}
\]
by Lemma~\ref{lem:partition}. The process visits a state with exactly $k$
components once for each $k=n,n-1,\dots,2$, and $J$ is the sum of the increments
at those states, so by the tower property
\[
\E[J]\ \le\ \sum_{k=2}^n\frac k{2n}=\frac{n(n+1)/2-1}{2n}=\frac{(n-1)(n+2)}{4n} .
\]
The second bound in \eqref{eq:EJ} follows from Lemma~\ref{lem:HJ}:
\[
\E[H]=n-\frac1n+\E[J]\le\frac{4n^2-4+(n-1)(n+2)}{4n}=\frac{(n-1)(5n+6)}{4n} .
\]
For strictness, note that for $n\ge4$ the state reached after the first merger is
$(2,1,1,\dots,1)$ with $k=n-1\ge3$ components which are not all equal, so
Lemma~\ref{lem:partition} is strict there. For $n=2$ and $n=3$ every state
visited has $k=2$ or is the all-ones state, so every step is an equality; indeed
$J\equiv5/6$ and $H\equiv7/2$ deterministically at $n=3$.
\end{proof}

\begin{remark}[the merger law is needed, not only the state space]
\label{rem:notpathwise}
Unlike \eqref{eq:Jlower}, Lemma~\ref{lem:budget} has no pathwise version. For the
balanced merger history on $n=4$ leaves --- merge two disjoint pairs of
singletons, then merge the two blocks --- we have
\[
J=\frac12+\frac12+\frac14=\frac54\ >\ \frac{(n-1)(n+2)}{4n}=\frac98,
\]
so by \eqref{eq:PhiH} that history has $\E[\Phi\mid\text{history}]=4$, below the
threshold $3(n-1)(n-2)/n=4.5$ that the proof below requires. Nor would the state
space alone suffice: replacing the multiplicative choice \eqref{eq:mcrate} by a
uniform choice among the $\binom k2$ pairs of blocks breaks
Lemma~\ref{lem:partition} at once. At the state $(1,1,2)$, where $n=4$, $k=3$
and the bound $k/(2n)$ is $3/8$, the multiplicatively weighted average
$Z^{-1}\sum_{i<j}a_ia_j/(a_i+a_j)$ equals $11/30\le3/8$, while the unweighted
average of $1/(a_i+a_j)$ is $7/18>3/8$.
\end{remark}

\begin{proof}[Proof of Theorem B2]
Let $n\ge4$. Combining \eqref{eq:PhiH} with Lemma~\ref{lem:budget},
\begin{equation}
\label{eq:Philower}
\E[\Phi]=8(n-1)-4\,\E[H]\ >\ 8(n-1)-\frac{(n-1)(5n+6)}n=\frac{3(n-1)(n-2)}n .
\end{equation}
The tree has $(n-1)(n-2)$ ordered pairs of distinct edges, of which $\Phi$ are
adjacent; hence it has $(n-1)(n-2)-\Phi$ ordered disjoint pairs. The number of
ordered pairs of disjoint edges of $K_n$ is
$6\binom n4=n(n-1)(n-2)(n-3)/4$, and each contributes $p_2$, so
\[
p_2=\frac{4\bigl[(n-1)(n-2)-\E[\Phi]\bigr]}{n(n-1)(n-2)(n-3)}
\ <\ \frac{4(n-1)(n-2)\bigl[1-3/n\bigr]}{n(n-1)(n-2)(n-3)}
=\frac{4(n-3)}{n^2(n-3)}=\frac4{n^2}=p_0^2 . \qedhere
\]
\end{proof}

\begin{proof}[Proof of Corollary B3]
Two distinct edges of $K_n$ either share a vertex or not; apply Theorem B1 or
Theorem B2. For $n=3$ no disjoint pair exists.
\end{proof}

\section{The identity, and the reduction to \texorpdfstring{$\E[L_n]$}{E[Ln]}}
\label{sec:identity}

The bounds of \S\ref{sec:kn} were two estimates of the same functional $H$. That
functional has an exact meaning, and identifying it is what turns the pair
correlations on $K_n$ into a statement about a classical quantity. Recall that
$L_n$ is the weight of $\MST(K_n)$ under i.i.d.\ $\mathrm{Exp}(1)$ weights, so
that in the coalescent of \S\ref{sec:kn} the weights are the clock ring times.

\begin{lemma}
\label{lem:HeqnL}
$\E[H]=n\,\E[L_n]$ for every $n\ge2$.
\end{lemma}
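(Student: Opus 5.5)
We work in the continuous-time coalescent of \S\ref{sec:kn}, where the edge weights are the $\mathrm{Exp}(1)$ ring times. Let $\kappa(t)$ be the number of components of the accepted forest at time $t$, and let $a_1,\dots,a_k$ be the component sizes at that time.

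\emph{Step 1: $L_n$ as an integral of $\kappa-1$.} Each accepted merger occurs at its ring time $\tau_m$ and lowers $\kappa$ by one. Since $\kappa(\tau_m-)-1$ counts the mergers not yet performed, summing over mergers gives
\[
L_n=\sum_m \tau_m=\int_0^\infty(\kappa(t)-1)\,dt .
\]

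\emph{Step 2: the compensator of $H$.} By the memorylessness argument of Lemma~\ref{lem:coalescent}, in continuous time each pair of blocks $A_i,A_j$ merges at rate $a_ia_j$, independently of the past. A merger of blocks of sizes $a$ and $b$ adds $1/a+1/b$ to $H$. The instantaneous drift of $H$ is therefore
\[
\sum_{i<j}a_ia_j\Bigl(\frac1{a_i}+\frac1{a_j}\Bigr)=\sum_{i<j}(a_j+a_i)=(k-1)n .
\]
This is an exact identity, unlike the inequality in Lemma~\ref{lem:partition}. It follows that $H(t)-n\int_0^t(\kappa(s)-1)\,ds$ is a martingale.

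\emph{Step 3: pass to the limit.} $H$ is bounded: there are $n-1$ jumps, each at most $2$. The process reaches $\kappa=1$ at a time $\tau_{n-1}$ that is a sum of $n-1$ exponential holding times, so it is integrable. Optional stopping at $\tau_{n-1}$, or monotone convergence as $t\to\infty$, is then legitimate. Combining with Step 1 gives $\E[H]=n\,\E[L_n]$.

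To avoid martingale language, we can argue one state at a time instead. From a state with $k$ blocks, the holding time is exponential with rate $Z$. It is independent of which pair merges, so it contributes $\E[1/Z]$ to the ring time of the next merger. We then track $L_n$ as the sum of the holding times multiplied by $(k-1)$. Conditional on the state, the expected increment of $H$ is
\[
\frac{1}{Z}\sum_{i<j}a_ia_j\Bigl(\frac1{a_i}+\frac1{a_j}\Bigr)=\frac{(k-1)n}{Z},
\]
while the expected contribution to $L_n$ is $(k-1)/Z$. Summing over the states visited and applying the tower property gives $\E[H]=n\,\E[L_n]$ directly. This discrete version is cleaner, so we will present it.

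The main obstacle is the identity $L_n=\sum_{\text{states}}(k-1)\cdot(\text{holding time})$. It requires that the rejected clocks do not affect the holding times. This holds because, given the state, the holding time is the minimum of $Z$ fresh exponential clocks on the cross edges, while rejected edges lie inside blocks and are irrelevant.
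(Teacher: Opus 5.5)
Your proposal is correct and follows the paper's proof. The paper likewise writes $L_n=\int_0^\infty(\kappa(t)-1)\,dt$, uses the exact rate identity $\sum_{i<j}a_ia_j(1/a_i+1/a_j)=n(k-1)$ to make $H_t-n\int_0^t(\kappa(s)-1)\,ds$ a martingale, and lets $t\to\infty$ using $H\le 2(n-1)$. Your state-by-state version, which matches $\E[\Delta H\mid\text{past}]=n(k-1)/Z$ against $\E[\text{holding time}\mid\text{past}]=1/Z$, is the embedded-jump-chain form of the same compensator computation and avoids the limit; note only that $L_n=\sum_{\text{states}}(k-1)\cdot(\text{holding time})$ is a pathwise identity, so memorylessness is needed only for the conditional mean $1/Z$.
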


\begin{proof}
Let $\kappa(t)$ be the number of components of the graph formed by the edges of
weight at most $t$. Since the weight of an edge is its ring time,
\[
L_n=\sum_{g\in T}w(g)=\int_0^\infty\#\{g\in T:w(g)>t\}\,dt
=\int_0^\infty(\kappa(t)-1)\,dt ,
\]
because at time $t$ the number of accepted mergers still to come is exactly
$\kappa(t)-1$. In the continuous-time process, at
a state with components of sizes $a_1,\dots,a_k$ the total accepted-merger rate
is $Z$ and the expected rate at which the functional $H$ accumulates is
\[
\sum_{i<j}a_ia_j\Bigl(\frac1{a_i}+\frac1{a_j}\Bigr)=\sum_{i<j}(a_i+a_j)=n(k-1),
\]
while $\kappa-1=k-1$. Thus $H_t-n\int_0^t(\kappa(s)-1)\,ds$ is a martingale.
Since $H\le2(n-1)$ and $L_n$ is integrable, letting $t\to\infty$ gives
$\E[H]=n\,\E[L_n]$.
\end{proof}

\begin{proof}[Proof of Proposition D1]
With $D$ the degree of a fixed vertex, $\E[D(D-1)]=\E[\Phi]/n$ by exchangeability
and $\E[D]=2(n-1)/n$ because every spanning tree has $n-1$ edges. Hence, by
\eqref{eq:PhiH} and Lemma~\ref{lem:HeqnL},
\[
\E[D^2]=\frac{\E[\Phi]}n+\frac{2(n-1)}n
=\frac{8(n-1)-4n\E[L_n]}{n}+\frac{2(n-1)}n
=\frac{10(n-1)}n-4\,\E[L_n]. \qedhere
\]
\end{proof}

\begin{proof}[Proof of Corollary D2]
For $n\ge3$, Proposition D1 and \eqref{eq:PhiH} show that
$p_1\le p_0^2$ is
$\E[\Phi]\le4(n-1)(n-2)/n$, i.e.\ $8(n-1)-4n\E[L_n]\le4(n-1)(n-2)/n$, i.e.\
$\E[L_n]\ge(n-1)(n+2)/n^2$. For $n\ge4$, likewise $p_2\le p_0^2$ is
$\E[\Phi]\ge3(n-1)(n-2)/n$, i.e.\ $\E[L_n]\le(n-1)(5n+6)/(4n^2)$.
Lemma~\ref{lem:harmonic}, Lemma~\ref{lem:HJ}, and Lemma~\ref{lem:HeqnL} give
\[
\E[L_n]\ge1+\frac{H_n-1}{n}-\frac1{n^2}.
\]
This implies the weaker lower threshold in the equivalence because
$H_n-1\ge(n-1)/n$. Equality in the harmonic bound holds for $n=2,3$; for
$n\ge4$ a non-comb history has positive probability, so it is strict. The
upper estimate and its equality statements follow from Lemma~\ref{lem:budget}.
\end{proof}

Thus the two pair inequalities become bounds on $\E[L_n]$, proved here without
input from the minimum-spanning-tree-weight literature. The harmonic lower bound
still tends to $1$ and is asymptotically loose. The upper bound tends to $5/4$:
it is exact at $n=2,3$, within $1\%$ of $\E[L_n]$ through $n=10$, and about
$4\%$ above the limit $\zeta(3)$. See Table~\ref{tab:exact}. Both pair ratios
increase in the computed range. The maximum
of $\E[L_n]$ in Gamarnik's range $2\le n\le45$ occurs at $n=8$, where
$\E[L_8]=1.2459812\ldots$ against the upper bound $161/128=1.2578125$;
decrease for every $n\ge8$ remains conjectural \cite[Conjecture 4.1]{Gamarnik}.

\begin{table}[t]
\centering
\begin{tabular}{rllcll}
\toprule
$n$ & $p_1/p_0^2$ & $p_2/p_0^2$ & $\E[L_n]$ & harmonic lower bd.
& upper bd.\ $\frac{(n-1)(5n+6)}{4n^2}$\\
\midrule
$3$  & $0.7500000000$ & ---            & $1.1667$ & $1.1667$ & $7/6$\\
$4$  & $0.7555555556$ & $0.9777777778$ & $1.2167$ & $1.2083$ & $39/32$\\
$5$  & $0.7597552910$ & $0.9804894180$ & $1.2353$ & $1.2167$ & $31/25$\\
$6$  & $0.7630719281$ & $0.9825707626$ & $1.2427$ & $1.2139$ & $5/4$\\
$7$  & $0.7657742674$ & $0.9842257326$ & $1.2454$ & $1.2071$ & $123/98$\\
$8$  & $0.7680285888$ & $0.9855771290$ & $1.2460$ & $1.1991$ & $161/128$\\
$9$  & $0.7699442296$ & $0.9867038469$ & $1.2455$ & $1.1909$ & $34/27$\\
$10$ & $0.7715965139$ & $0.9876591349$ & $1.2445$ & $1.1829$ & $63/50$\\
$11$ & $0.7730392810$ & $0.9884803595$ & $1.2432$ & $1.1754$ & $305/242$\\
$12$ & $0.7743121839$ & $0.9891945849$ & $1.2418$ & $1.1683$ & $121/96$\\
$13$ & $0.7754451621$ & $0.9898219351$ & $1.2405$ & $1.1618$ & $213/169$\\
$14$ & $0.7764613009$ & $0.9903777088$ & $1.2391$ & $1.1557$ & $247/196$\\
$20$ & $0.7809063153$ & $0.9927279258$ & $1.2323$ & $1.1274$ & $1007/800$\\
$30$ & $0.7850808025$ & $0.9948028441$ & $1.2250$ & $1.0987$ & $377/300$\\
\bottomrule
\end{tabular}
\caption{Exact pair correlations for $\MST(K_n)$, the harmonic lower bound,
and the upper bound from Corollary D2. Every entry comes from an exact rational;
the correlation-ratio, $\E[L_n]$, and harmonic-bound decimals are rounded as
displayed. The full exact table for $3\le n\le30$ accompanies the paper.}
\label{tab:exact}
\end{table}

Proposition D1 also recasts Gamarnik's conjectured decrease: for $n\ge8$,
$\E[L_n]>\E[L_{n+1}]$ is equivalent to
\[
\E[\dg_{n+1}(x)^2]-\E[\dg_n(x)^2]>\frac{10}{n(n+1)},
\]
where $\dg_k$ is the degree in $\MST(K_k)$.

\begin{remark}[what Theorem B2 says about the expected tree weight]
\label{rem:eln}
Corollary D2 exposes Theorem B2 as the non-asymptotic bound
\begin{equation}
\label{eq:elngate}
\E[L_n]\;\le\;\frac{(n-1)(5n+6)}{4n^2}\qquad(n\ge2),
\end{equation}
a statement about a quantity with a long history: Frieze \cite{Frieze}, Steele
\cite{Steele}, Janson \cite{Janson}, Fill and Steele \cite{FillSteele}, Gamarnik
\cite{Gamarnik}, Nishikawa, Otto and Starr \cite{NOS}, Cooper, Frieze, Ince,
Janson and Spencer \cite{CFIJS}, and Li and Zhang \cite{LiZhang}. These works
develop asymptotics, exact small-$n$ evaluations, and polynomial or Tutte
representations. None of these sources states the all-$n$ upper bound
\eqref{eq:elngate}. It is exact at $n=2,3$ and lies within
$0.0119$ of the largest value in Gamarnik's computed range, but its elementary
form makes prior knowledge plausible; Theorem B2 may therefore be a new
complete-range proof of a known bound rather than a new inequality. Theorem B1
uses the other half of Corollary D2 and is unaffected.
\end{remark}

\begin{proof}[Proof of Corollary D3]
The expansion of Cooper, Frieze, Ince, Janson and Spencer \cite{CFIJS} in
particular gives $\E[L_n^{\mathrm{unif}}]\to\zeta(3)$ for i.i.d.\
$\mathrm{Uniform}[0,1]$ weights. Li and Zhang \cite{LiZhang} give the
quantitative transfer
\begin{equation}
\label{eq:lizhang}
\E[L_n]-\E[L_n^{\mathrm{unif}}]=\frac{\zeta(3)}n
+O\Bigl(\frac{\log^2n}{n^2}\Bigr),
\end{equation}
so $\E[L_n]\to\zeta(3)$ for exponential weights as well. Insert this into
Proposition D1:
\[
\lim_{n\to\infty}\E_{\MST(K_n)}[\dg(x)^2]=10-4\zeta(3)=5.1917723873616\ldots
\]
By \cite[Proposition 3.11]{TZ} this is exactly $\E[N^2]$ for the root degree $N$
of the wired minimal spanning forest on the Poisson-weighted infinite tree
\cite{NT}, which is the assertion of \cite[Question 3.8]{TZ}.
\end{proof}

The sharper expansion identifies the rates and the two limiting correlation
ratios. Let $c_1,c_2$ be the constants in \cite[Theorem 1]{CFIJS}, so that
\begin{equation}
\label{eq:uniform-expansion}
\E[L_n^{\mathrm{unif}}]
=\zeta(3)+\frac{c_1}{n}+\frac{c_2+o(1)}{n^{4/3}},
\qquad c_1=0.0384956\ldots .
\end{equation}

\begin{corollary}[asymptotic rates]
\label{cor:rates}
For exponential weights,
\begin{align*}
\E[L_n]
&=\zeta(3)+\frac{c_1+\zeta(3)}n+\frac{c_2+o(1)}{n^{4/3}},\\
\E[\dg(x)^2]
&=10-4\zeta(3)-\frac{10+4(c_1+\zeta(3))}{n}
-\frac{4c_2+o(1)}{n^{4/3}}.
\end{align*}
Moreover,
\begin{align*}
\frac{p_1}{p_0^2}
&=2-\zeta(3)+\frac{4-c_1-4\zeta(3)}n
-\frac{c_2+o(1)}{n^{4/3}},\\
\frac{p_2}{p_0^2}
&=1+\frac{4\zeta(3)-5}{n}+O(n^{-2}).
\end{align*}
In particular the degree second moment converges at order $n^{-1}$, while the
adjacent and disjoint ratios tend to $2-\zeta(3)$ and $1$, respectively.
\end{corollary}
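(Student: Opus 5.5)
The plan is to substitute the two known expansions into the exact finite formulas of Proposition~D1, \eqref{eq:PhiH} and Lemma~\ref{lem:HeqnL}, and then expand. No new probabilistic input is needed.

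\emph{Step 1: the expansion of $\E[L_n]$.} Add the Li--Zhang transfer \eqref{eq:lizhang} to the uniform expansion \eqref{eq:uniform-expansion}. This gives
\[
\E[L_n]=\zeta(3)+\frac{c_1+\zeta(3)}n+\frac{c_2+o(1)}{n^{4/3}}+O\Bigl(\frac{\log^2 n}{n^2}\Bigr).
\]
Since $\log^2n/n^2=o(n^{-4/3})$, the last error term is absorbed into the $o(1)/n^{4/3}$ term. That is the first display.

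\emph{Step 2: the degree second moment.} Proposition~D1 gives $\E[\dg(x)^2]=10-10/n-4\E[L_n]$. Substituting Step 1 yields the second display directly.

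\emph{Step 3: the adjacent ratio.} Use $p_1=\E[\Phi]/(n(n-1)(n-2))$ from the proof of Theorem~B1, together with $\E[\Phi]=8(n-1)-4n\E[L_n]$ from \eqref{eq:PhiH} and Lemma~\ref{lem:HeqnL}, and $p_0^2=4/n^2$. This gives the exact expression
\[
\frac{p_1}{p_0^2}=\frac{n^2}{(n-1)(n-2)}\Bigl(2-\frac2n-\E[L_n]\Bigr).
\]
Expand $n^2/((n-1)(n-2))=1+3/n+O(n^{-2})$ and insert Step 1, keeping only the constant, $1/n$ and $n^{-4/3}$ terms. The constant term is $2-\zeta(3)$. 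The $1/n$ coefficient is
\[
-2-(c_1+\zeta(3))+3(2-\zeta(3))=4-c_1-4\zeta(3).
\]
The $n^{-4/3}$ term is $-(c_2+o(1))$. Products of the $3/n$ factor with the $1/n$ or $n^{-4/3}$ corrections are $O(n^{-2})=o(n^{-4/3})$.

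\emph{Step 4: the disjoint ratio.} Use the formula from the proof of Theorem~B2:
\[
\frac{p_2}{p_0^2}=\frac{n}{n-3}\Bigl(1-\frac{\E[\Phi]}{(n-1)(n-2)}\Bigr).
\]
Here
\[
\frac{\E[\Phi]}{(n-1)(n-2)}=\frac{8}{n-2}-\frac{4n\E[L_n]}{(n-1)(n-2)}=\frac{8-4\zeta(3)}{n}+O(n^{-2}).
\]
The $1/n$ part of $\E[L_n]$ only contributes at order $n^{-2}$, so a cruder error term suffices. Multiplying by $n/(n-3)=1+3/n+O(n^{-2})$ gives $1+(4\zeta(3)-5)/n+O(n^{-2})$. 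The final limiting statements then follow by reading off constant terms.

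The only delicate point is the error bookkeeping. One must check that the Li--Zhang error $O(\log^2n/n^2)$ and all cross terms of order $n^{-2}$ are genuinely $o(n^{-4/3})$ wherever the $n^{-4/3}$ term is retained, namely in Steps~1--3. In Step~4 one must not claim more precision than $O(n^{-2})$: the $n^{-4/3}$ term of $\E[L_n]$ enters there only multiplied by $1/n$, so it contributes $O(n^{-7/3})$, and the $O(\log^2 n/n^2)$ error contributes $O(\log^2 n/n^3)$; both are within $O(n^{-2})$.
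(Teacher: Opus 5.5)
Your proposal is correct and is essentially the paper's proof: combine \eqref{eq:lizhang} with \eqref{eq:uniform-expansion}, substitute into Proposition D1, and expand the exact pair-counting formulas for $p_1/p_0^2$ and $p_2/p_0^2$. The paper writes these formulas in terms of $m_n=\E[\dg(x)^2]$, and your versions in terms of $\E[L_n]$ are algebraically identical to them. Your error bookkeeping is accurate, including $\log^2 n/n^2=o(n^{-4/3})$ and the fact that the disjoint ratio needs only $\E[L_n]=\zeta(3)+O(1/n)$.
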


\begin{proof}
The first line follows by combining \eqref{eq:lizhang} and
\eqref{eq:uniform-expansion}; Proposition D1 gives the second. If
$m_n=\E[\dg(x)^2]$, direct counting of ordered adjacent and disjoint edge pairs
gives
\[
\frac{p_1}{p_0^2}
=\frac{m_n-2+2/n}{4(1-3/n+2/n^2)}\quad(n\ge3),\qquad
\frac{p_2}{p_0^2}
=\frac{n^2((n-1)-m_n)}{(n-1)(n-2)(n-3)}\quad(n\ge4).
\]
Substitution and expansion prove the last two formulas.
\end{proof}

This argument does not pass through the local weak limit. The local weak limit
of $\MST(K_n)$ is due to Addario-Berry
\cite{ABerry}, and convergence in distribution of the root degree does not by
itself give convergence of the second moment, which is why Question 3.8 was
asked; the recent study of dynamic local convergence for Prim's algorithm
\cite{CGH} likewise does not address moments. Proposition D1 replaces the
missing uniform integrability by an exact finite identity. Corollary
\ref{cor:rates} gives an asymptotic rate, although the $o(1)$ in
\eqref{eq:uniform-expansion} does not supply explicit finite error constants.
Tang and Zhang \cite[Proposition 3.11]{TZ}
print the decimal $5.191717$; the correct value of $10-4\zeta(3)$ is
$5.1917723873616\dots$, and the closed form there is not affected.

\section{Open problems}
\label{sec:open}

Theorem A locates positive correlation in the covariance of two bottleneck
distances, while Proposition D1 controls the complete-graph pair correlations
through the expected tree weight. Both descriptions leave natural quantitative
questions.

\begin{problem}[the optimal constant]
Determine the least $C$ for which the conclusion of Theorem A holds. We know only
that it lies in $[13938405/13872419,\,8]$. Corollary~\ref{cor:small-marginals} gives
the sharper factor $2+o(1)$ when both marginals vanish, but does not establish a
global factor $2$. The factor $8$ comes from the sharp inequality
$\E[T^2]\le2(\E T)^2$ and one factor $2$ from each deletion sandwich; improving
it requires using more of the joint structure than these three separate bounds.
\end{problem}

\begin{problem}[the simple-graph constant]
Determine the supremum of $p_{ef}/(p_ep_f)$ over finite connected simple graphs.
Proposition~\ref{prop:hub-witnesses} gives the lower bound
$13938405/13872419$, while Theorem A gives the upper bound $8$.
\end{problem}

\begin{problem}[effective errors and monotonicity]
Corollary~\ref{cor:rates} gives the asymptotic rate and the limits of both
complete-graph ratios, but not explicit finite error constants. Can the error in
\eqref{eq:uniform-expansion} be made effective enough to give useful finite-$n$
bounds? Both ratios increase for $3\le n\le30$ where defined
(Table~\ref{tab:exact}); are they monotone throughout their domains?
\end{problem}

\begin{problem}[other families at every size]
Tang and Zhang \cite[Remark 1.2]{TZ} state that their method also gives the
disjoint-pair property for $K_{n,n}$ for all sufficiently large $n$. Is there an
all-$n$ analogue of Corollary B3 for $K_{n,n}$, or for other natural families?
The method of \S\ref{sec:kn} uses the complete-graph transition rate
\eqref{eq:mcrate} and does not transfer as it stands. The minimum spanning tree
weight has been studied on several other host graphs, for instance regular graphs
\cite{BFM}, so the analogue of Corollary D2 there is a concrete starting point.
Finally, is there an analogue of Theorem A for the wired minimal spanning forest
on an infinite graph?
\end{problem}

\clearpage
\appendix

\section{Exact computations and computer assistance}
\label{sec:verification}

The universal statements of \S\ref{sec:universal} and the complete-graph
theorems of \S\ref{sec:kn}--\ref{sec:identity} use no computation. Computation
supplies the exact table and the finite enumerations in
Propositions~\ref{prop:simple-counterexample} and \ref{prop:hub-witnesses}, and
checks the algebra of Proposition~\ref{prop:bundle} and the coalescent model
against direct evaluations of $\PMST$. Every computed quantity is an exact
rational; decimals are rounded only when printed. The full table and
standard-library Python code are available in the public companion repository
\url{https://github.com/agupta/random-mst-correlations}; the citable preprint is
archived at \href{https://doi.org/10.5281/zenodo.21780630}{doi:10.5281/zenodo.21780630}.

\subsection{What each computation evaluates}

\begin{enumerate}
\item \emph{Direct Kruskal enumerations.} One route enumerates all $m!$ edge
orders; another sums exactly over the relative-order blocks determined by the
marked weights. These are distinct finite enumerations, but both use the same
Kruskal/union--find semantics. They cover $K_4$, $K_5$, the small-graph
examples, and the two- and three-hub instances; a symmetry-compressed version
handles the hub family for $1\le s\le9$.
\item \emph{The accepted-merger coalescent.} A dynamic program over integer
partitions with transition probabilities \eqref{eq:mcrate} returns $\E[J]$,
$\E[H]$, $\E[\Phi]$, $\E[D^2]$, $p_1$, $p_2$ and $\E[L_n]$ exactly. This
implements the proof in \S\ref{sec:kn}, rather than providing independent
evidence for it; the direct Kruskal computations test it in small cases.
\item \emph{$\E[L_n]$ from the area identity.} Independently of the coalescent,
the component count $\kappa(t)$ of $G(n,1-e^{-t})$ gives
\[
  \E[L_n]=\int_0^\infty(\E[\kappa(t)]-1)\,dt.
\]
Substituting $u=e^{-t}$ turns the integrand into an integer polynomial in $u$
built from the exact connectivity polynomial of $G(s,p)$, and the integral
becomes a finite rational sum. This route uses neither mergers nor degrees.
\end{enumerate}

The direct routes agree on every marginal and pair probability of $K_4$ and
agree with the coalescent program on $p_1$, $p_2$, and
$\E[\Phi]=8(n-1)-4\E[H]$ for $n=4,5$. The connectivity-polynomial route agrees
with it on $\E[L_n]$, thereby checking Proposition D1 without reusing the
coalescent identity $\E[H]=n\E[L_n]$. The resulting table agrees with
\cite{Gamarnik} through $n=30$; the committed transcription extends through
$n=45$. Internal identities such as $p_0=2/n$, $\sum_e p_e=n-1$, and
$\E[H]=n-\frac1n+\E[J]$ are also checked, but are not counted as independent
routes.

\subsection{What is verified, and in what range}

The documented exhaustive ranges are as follows. Merger histories are checked
through $n=12$, integer partitions through $n=32$, and the closing rational
algebra through $n<300$. Theorem A and its frozen-environment lemmas are checked
on every connected labelled multigraph with at most four vertices and at most
six edges, allowing loops, parallel classes, and disconnected deleted
environments. The bundle certificates are re-expanded coefficientwise and the
family is evaluated for $r+s\le120$; the $R(t+1)>R(t)$ certificate is similarly
re-expanded and scanned through $t=300$. The hub-family computation covers
$1\le s\le9$. The simple-graph minimality assertion uses the bridge reduction
in the text and a census of 1528 labelled bridgeless graphs with at most seven
edges, reducing to 17 isomorphism classes and 272 representative marked pairs.
Theorem C and Remark~\ref{rem:attenuation} are also checked directly for
subdivision lengths $t=1,2,3$.

Several tempting strengthenings fail, and the suite retains them as controls:
$C=1$ fails by \eqref{eq:lps}; $\E[T^2]\le\frac32(\E T)^2$ fails for the
minimum of six uniforms; $a\ge\frac34A$ fails when the marked edges join the
same two vertices and no other edge is present; Lemma~\ref{lem:harris} is an
equality when both marked edges are bridges; and Lemma~\ref{lem:partition}
fails at $(2,1,1)$ if block pairs are chosen uniformly rather than with
multiplicative weights.

These checks establish only the stated finite assertions and test the modelling
and algebra; they do not replace the analytic proofs. Complete commands and
ranges are recorded in the companion repository README.

\section*{Acknowledgment of generative-AI assistance}
Anthropic Claude Code (Claude 5 family) and OpenAI Codex (GPT-5.6 family) were
used extensively for proof exploration, software development, exact
computational checks, literature discovery, and drafting and editing the
manuscript. The author selected the arguments and methods, checked the cited
sources and reported computations, and takes full responsibility for the
content. These systems are not authors or independent guarantors of
correctness.

\section*{Declarations}
The author declares no competing interests. This research received no external
funding. The manuscript and accompanying data are released under CC BY 4.0;
the accompanying software is released under the MIT License.

\end{document}